\newtheorem{thm}{Theorem}
\newtheorem{prop}[thm]{Proposition}
\newtheorem{theorem}[thm]{Theorem}
\newtheorem{lemma}[thm]{Lemma}
\newtheorem{question}[thm]{Question}
\theoremstyle{definition}
\newtheorem*{definition*}{Definition}
\newtheorem{remark}[thm]{Remark}
\newcommand{\CPb}{\overline{\mathbb{CP}}{}^{2}}
\newcommand{\CP}{{\mathbb{CP}}{}^{2}}
\newcommand{\R}{\mathbb{R}}
\newcommand{\Q}{\mathbb{Q}}
\newcommand{\Z}{\mathbb{Z}}
\newcommand{\M}{\operatorname{Mod}}
 \def\R{{\mathbb{R}}}
 \def\Z{{\mathbb{Z}}}
 \def\Q{{\mathbb{Q}}}
\begin{document}

\title[Geography of symplectic Lefschetz fibrations  and rational blowdowns]
{Geography of symplectic Lefschetz fibrations \\ and rational blowdowns}

\author[R. \.{I}. Baykur]{R. \.{I}nan\c{c} Baykur}
\address{Department of Mathematics and Statistics, University of Massachusetts, Amherst, MA 01003-9305, USA}
\email{baykur@math.umass.edu}

\author[M. Korkmaz]{Mustafa Korkmaz}
\address{Department of Mathematics, Middle East Technical University, 06800 Ankara, Turkey}
\email{korkmaz@metu.edu.tr}

\author[J. Simone]{Jonathan Simone}
\address{School of Mathematics, Georgia Institute of Technology, Atlanta, GA 30332, USA}
\email{jsimone7@gatech.edu}

\begin{abstract}
We produce  simply connected,  minimal,  symplectic Lefschetz fibrations realizing all the lattice points in the symplectic geography plane below the Noether line.  This provides a \emph{symplectic} extension of the classical works populating the complex geography plane with holomorphic Lefschetz fibrations.  Our examples are obtained by rationally blowing down Lefschetz fibrations with clustered nodal fibers,  the total spaces of which are potentially new homotopy elliptic surfaces.  Similarly,   clustering nodal fibers on higher genera Lefschetz fibrations on standard rational surfaces, we get rational blowdown configurations that yield new constructions of small symplectic exotic $4$--manifolds.  We present an  example of a construction of a minimal symplectic exotic  $\CP \# \, 5 \CPb$ through this procedure applied to a genus--$3$ fibration. 
\end{abstract}

\maketitle

\setcounter{secnumdepth}{2}
\setcounter{section}{0}


\section{Introduction} 

The \emph{symplectic geography problem}, inspired by the study of compact complex algebraic surfaces  by Persson et.\,al.   \cite{persson81, persson87, perssonetalspin},  asks which pairs of integers $(a,b)$ can be realized as the holomorphic Euler characteristic $\chi_h=a$ and the first Chern number $c_1^2=b$ of a  closed minimal symplectic \mbox{$4$--manifold} \cite{gompf, mccarthywolfson}.  It is well-known that these invariants depend only on the underlying homotopy type of the  \mbox{$4$--manifold}  $X$,  satisfying the identities $\chi_h = \frac{1}{4}(\chi+\sigma)$ and  $c_1^2=2\chi+3\, \sigma$, where $\chi$ and $\sigma$ are the Euler characteristic and the signature of $X$.  Both coordinates are positive for  minimal simply connected $4$--manifolds of general type,  which are usually the  focus of the geography problem. 

In the case of compact complex algebraic surfaces,  the geography plane was populated almost exclusively by surfaces that are the total spaces of singular fibrations over complex curves \cite{persson81, persson87,perssonetalspin, zchen87, zchen91, sommese, roulleauurzua}, where most lattice points in the region $8 a \geq  b \geq 2a -6$ are realized by minimal holomorphic Lefschetz fibrations; see e.g.  \cite{perssonetalspin}.  The associated invariants of a compact complex surface of general type satisfy both the \textit{Bogomolov-Miayoka-Yau inequality} $9 \chi_h \geq c_1^2$ and  the \textit{Noether inequality} $ c_1^2 \geq 2 \chi_h -6$.  

Perhaps the most striking difference between the complex and symplectic geography is that the Noether inequality fails for symplectic $4$--manifolds \cite{gompf, fintushelparkstern,  ABBKP, sakalli}.  
However, in this case,  there are only sporadic examples realizing lattice points in the region $ 2a -6 \geq b > 0$ as  Lefschetz fibrations, which were suggested by Fintushel and Stern in \cite{fintushelsternLFs}; see Remark~\ref{FSexamples}. The families of examples we produce in our first theorem will in particular contain minimal simply-connected Lefschetz fibrations populating this entire region. The interested reader can  refer to  Remark~\ref{LFgeography} for a further discussion on the geography of symplectic Lefschetz fibrations.

Before we formulate the theorem, let us recall the rational blowdown operation.  The \textit{rational blowdown} is performed by cutting out a regular neighborhood of a configuration of spheres, called a plumbing, embedded in an ambient $4$--manifold and replacing it with a rational homology $4$--ball.  Since the rational blowdown was  introduced by Fintushel-Stern in \cite{fintushelstern} and generalized by Park in \cite{parkbdown}, it has been used to construct many  exotic $4$--manifolds; e.g. \cite{stipsiczszabo,  park, parkexotic3cp, parkstipsiczszabo, yasui, parkparkshin4}, most of which are symplectic due to Symington \cite{symington}, \cite{symington2}. The appeal of this construction, along with other similar symplectic cut-and-paste operations (e.g. \cite{karakurtstarkston, simone}), is that the fundamental group and Seiberg-Witten invariant calculations are relatively routine.  The first step in this process is locating a suitable configuration of spheres in some $4$--manifold.  Partially because there is a well-known classification of singular fibers on elliptic fibrations, elliptic fibrations have been the most popular starting place to search for plumbings that can be symplectically rationally blown down (e.g.  most of the above referenced articles start with elliptic fibrations). More recently, Akhmedov and Sakall{\i} utilized the classification of singular fibers on certain holomorphic genus--$2$ Lefschetz fibrations for rational blowdowns
\cite{akhmedovsakalli}.

\begin{figure}
	\centering
	\includegraphics[scale=.5]{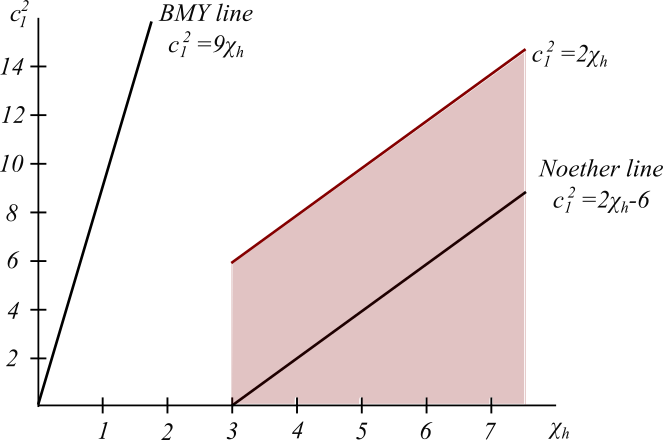}
	\caption{The lattice points in the shaded region $\mathcal{R}$ are populated by our minimal simply-connected symplectic Lefschetz fibrations.}
	\label{fig:gplane}
\end{figure}

In this article, our starting point will be genus $g >1$ Lefschetz fibrations on rational surfaces and homotopy elliptic surfaces, corresponding to positive factorizations in the mapping class group $\M(\Sigma_g)$. The particular positive factorizations we use allow us to cluster nodal singularities and obtain some extremal configurations for our rational blowdown procedures.  Thus, our first set of results will be on clustering nodal singularities in genus--$g$ Lefschetz fibrations on $\CP\#(4g+5)\CPb$; see Lemmas~\ref{lem:genus=g} and~\ref{lem:genus=3}.  Coupling these ideas with twisted fiber sums,  for each $g \geq 2$, we construct minimal symplectic genus--$g$ Lefschetz fibrations $(Z_{g+1},  f_{g+1})$, where $Z_{g+1}$ is a homotopy elliptic surface $E(g+1)$.  These  will contain  $2g+2$   disjoint embedded symplectic $(-4$)--spheres on the fibers we can then rationally blow down to get our first result.  Consider the region
$$\mathcal{R}=\{(a,b)\in\Z^2\text{ }|\text{ } a\ge 3 \text{ and } 0 < b\le 2a\}$$
shown in Figure~\ref{fig:gplane}. We have:

\begin{theorem}
	 For each point $(a,b)\in\mathcal{R}$, there exists a minimal simply-connected non-spin symplectic genus $g=a-1$ Lefschetz fibration $(Z_{a,b}, f_{a,b})$ satisfying $\chi_h(Z_{a,b})=a$ and $c_1^2(Z_{a,b})=b$ obtained from $(Z_{a}, f_a)$ by rationally blowing down  $b$ many $(-4)$--spheres contained in the fibers.
	\label{thm:gplane}
\end{theorem}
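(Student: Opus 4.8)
The plan is to set $g=a-1\ge 2$ and to realize the point $(a,b)$ by rationally blowing down $(-4)$-spheres sitting inside the fibers of the genus-$g$ Lefschetz fibration $(Z_a,f_a)$ on the homotopy elliptic surface $E(a)$ produced in the preceding sections. We may take as given there that $(Z_a,f_a)$ is assembled as a twisted fiber sum of the clustered genus-$g$ fibrations on $\CP\#(4g+5)\CPb$ of Lemma~\ref{lem:genus=g}; that $Z_a$ is minimal and simply connected with $\chi_h(Z_a)=a$ and $c_1^2(Z_a)=0$; and that $Z_a$ contains $2g+2=2a$ pairwise disjoint symplectic $(-4)$-spheres $S_1,\dots,S_{2a}$ lying in fibers of $f_a$ --- necessarily as components of reducible fibers, since a sphere inside a fiber of genus $g\ge 2$ is a proper subsurface. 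As $(a,b)\in\mathcal{R}$ forces $1\le b\le 2a$, we may simultaneously and independently rationally blow down $S_1,\dots,S_b$ (their neighborhoods being disjoint), and we let $(Z_{a,b},f_{a,b})$ denote the result.

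The first task is to see that $(Z_{a,b},f_{a,b})$ is again a symplectic genus-$g$ Lefschetz fibration. Each $S_i$ is a symplectic $(-4)$-sphere, so by Symington's theorem the rational blowdown is carried out symplectically, yielding a symplectic form on $Z_{a,b}$. Since $S_i$ lies in a single reducible fiber of $f_a$, the surgery is supported near that fiber: one removes the Euler number $-4$ disk bundle neighborhood $\nu(S_i)$ and glues back the rational homology ball $B_2$, which itself carries a genus-$g$ Lefschetz fibration over the disk restricting correctly along $\partial\nu(S_i)=L(4,1)$. Thus $f_{a,b}$ is again a genus-$g$ Lefschetz fibration over $S^2$; it is moreover relatively minimal, since the unaffected fibers are unchanged and removing a $(-4)$-sphere component from a reducible fiber cannot create a $(-1)$-sphere fiber component, as an arithmetic-genus versus self-intersection count for the singular fiber shows.

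Next come the invariants and the residual topological claims. A single $(-4)$ rational blowdown deletes the Euler number $-4$ disk bundle over $S^2$ (Euler characteristic $2$, signature $-1$) and glues in $B_2$ (Euler characteristic $1$, signature $0$), so by Novikov additivity it changes $(\chi,\sigma)$ by $(-1,+1)$; hence $\chi_h=\tfrac{1}{4}(\chi+\sigma)$ is unchanged while $c_1^2=2\chi+3\sigma$ increases by $1$. Doing this $b$ times gives $\chi_h(Z_{a,b})=\chi_h(Z_a)=a$ and $c_1^2(Z_{a,b})=c_1^2(Z_a)+b=b$, as required. Simple connectivity follows from van Kampen, using that the meridian of each $S_i$ dies in $Z_a\setminus S_i$ (it caps off against a dual surface --- e.g. a section --- provided by the construction) and that $\pi_1(B_2)$ is generated by the image of such a meridian, so $\pi_1(Z_{a,b})=1$. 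For non-spin, note that the signature of $Z_{a,b}$ is not divisible by $16$ for all but a congruence-constrained subfamily of $(a,b)$, which forces an odd intersection form; and the twisted fiber sum defining $Z_a$ is arranged so that $Z_a$ --- hence $Z_{a,b}$, after blowing down spheres disjoint from it --- carries a homology class of odd self-intersection, settling the remaining cases. Finally, $Z_{a,b}$ is minimal by Usher's theorem, since $f_{a,b}$ is a relatively minimal Lefschetz fibration of fiber genus $g=a-1\ge 2$ and is not one of the exceptional (rational or ruled) fiber sums that theorem excludes.

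The substance of the argument sits in two places. Upstream it relies on the prior construction of $(Z_a,f_a)$ carrying a full family of $2a$ disjoint symplectic $(-4)$-spheres positioned inside fibers so that any sub-family can be blown down fiberwise --- precisely what clustering nodal singularities together with the twisted fiber sum delivers. Within the present step, the point that requires genuine care, and the one I expect to be the main obstacle, is verifying that this fiberwise rational blowdown leaves the Lefschetz fibration relatively minimal and introduces no $(-1)$-class, so that Usher's minimality theorem applies; I would treat this by pinning down the local genus-$g$ Lefschetz model on $B_2$ and checking relative minimality directly against the explicit reducible fibers coming from the positive factorizations.
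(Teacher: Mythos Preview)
Your overall architecture matches the paper's, but there is a genuine gap in the non-spin step and a divergence in how minimality and the Lefschetz structure are handled.

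\textbf{Non-spin.} Your argument is incorrect. You assert that ``the twisted fiber sum defining $Z_a$ is arranged so that $Z_a$ \dots\ carries a homology class of odd self-intersection,'' but this is false for half the cases: the paper shows (and needs, in order to identify $Z_{g+1}$ as a homotopy $E(g+1)$ via Freedman) that $Z_{g+1}$ is \emph{spin} when $g$ is odd. So there is no odd class in $Z_a$ to carry over, and your signature-mod-$16$ argument cannot cover the residual cases. The paper's mechanism is different and intrinsic to the blowdown: each lantern substitution introduces a \emph{separating} vanishing cycle, so after perturbation there is a reducible fiber with a genus--$1$ component of self-intersection $-1$, forcing the intersection form of $Z_{a,b}$ to be odd for every $b\ge 1$. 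This is where the non-spinness actually originates; it is not inherited from $Z_a$.

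\textbf{Lefschetz structure and minimality.} You try to build a local genus--$g$ model on $B_2$ and then invoke ``Usher's theorem'' once you have checked relative minimality by hand; you flag this yourself as the main obstacle. The paper bypasses both difficulties. It uses the Endo--Gurtas result that a lantern substitution in the monodromy factorization \emph{is} the rational blowdown of the corresponding $(-4)$--sphere; this immediately yields a new \emph{positive factorization}, hence a relatively minimal genus--$g$ Lefschetz fibration, with no local model to verify. For minimality of the total space, the paper does not appeal to Usher (which concerns fiber sums) but to Dorfmeister's theorem that symplectic rational blowdown preserves minimality. Your route could in principle be completed, but it requires establishing relative minimality of $f_{a,b}$ and ruling out rational/ruled total spaces, whereas the Endo--Gurtas plus Dorfmeister combination gives these conclusions essentially for free.

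\textbf{Simple connectivity.} Your van Kampen/meridian sketch is plausible but the paper's argument is more direct and uniform: since $(Z_{a,b},f_{a,b})$ comes with an explicit positive factorization, $\pi_1(Z_{a,b})$ is the quotient of $\pi_1(\Sigma_g)$ by the normal closure of the vanishing cycles. For $b<2a$ the original vanishing cycles of one $\CP\#(4g+5)\CPb$ summand survive and already kill $\pi_1(\Sigma_g)$; for $b=2a$ one checks that the new curves produced by the lantern substitutions still suffice.
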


\noindent Similar to the regular blowdown, each  time we rationally blow down a $(-4)$-sphere,  $c_1^2$ increases by one whereas $\chi_h$ does not change.  So, we see that the portion of the geography plane highlighted in Figure~\ref{fig:gplane} is populated by $Z_{a,b}$ as we vary $(a,b) \in \mathcal{R}$. 

Curiously, the homotopy elliptic surfaces $Z_{g+1}$ we build are not diffeomorphic to well-known homotopy elliptic surfaces obtained from $E(g+1)$ by logarithmic transforms or  knot surgery on an elliptic fiber for any $g > 3$; we show this in Proposition~\ref{newishhomotopye(n)}.  The symplectic $4$--manifolds $Z_{a,b}$
with $0 < b < a-3$ are moreover interesting in connection to a conjecture of Fintushel and Stern on the number of basic classes recently proven by Feehan and Leness \cite{feehanleness}; see Remark~\ref{superconformal}.

Although there are known exotic  symplectic $4$--manifolds in the homeomorphism classes of $ \CP \# m \CPb$ for as small as $m= 2$ \cite{AP2blowup} (also see \cite{FSpinwheels}),  to date,  the smallest  symplectic exotic $4$--manifold produced using  the rational blowdown operation is an exotic $\CP\#5\CPb$ \cite{parkparkshin3} (also see \cite{parkstipsiczszabo,  FSdoublenode} for examples that are not known to be symplectic).  Despite the first breakthroughs in constructions of small exotic rational surfaces all being via rational blowdowns \cite{park, stipsiczszabo, parkstipsiczszabo, FSdoublenode},  and a massive amount of literature on applications of rational blowdowns and their generalizations  over two decades,  the following question is still open:

\begin{question}
Is there an exotic $\CP \# m \CPb$ with $m < 5$ that can be obtained from a standard rational surface via rational blowdowns? 
If so,  what is the smallest such $m$?
\end{question}

In principle, by expanding our view to higher genus Lefschetz fibrations, we open the door to finding larger configurations of spheres that can be rationally blown down, leading to  possibly smaller exotic 4-manifolds.  Adapting this approach, we tackled this problem and we discovered many new constructions of examples right on the border.  To illustrate,  we will present a relatively simple construction, where we will start  with a particular genus--$3$ Lefschetz fibration on $\CP\#17\CPb$.  We construct this Lefschetz fibration by refactoring the monodromy associated to the hyperelliptic Lefschetz fibration on $\CP\#17\CPb$ by clustering nodes; see Lemma~\ref{lem:genus=3}.  After some blow-ups and one blow-down, we find a fairly simple configuration of spheres embedded in $\CP\#32\CPb$ that can be rationally blown down.   We then prove the following.

\begin{theorem}
There exists a minimal symplectic exotic $\CP\#5\CPb$ obtained by rationally blowing down a blow-up of a genus--$3$ Lefschetz fibration on a standard rational surface.
\label{thm:exotic}
\end{theorem}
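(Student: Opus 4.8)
The plan is to start from the explicit genus--$3$ Lefschetz fibration $(X,f)$ on the standard rational surface $X \cong \CP\#17\CPb$ furnished by Lemma~\ref{lem:genus=3}, whose refactored positive factorization in $\M(\Sigma_3)$ contains a prescribed cluster of parallel copies of a single curve $c$. Exactly as in the constructions behind Lemmas~\ref{lem:genus=g} and~\ref{lem:genus=3} (and Theorem~\ref{thm:gplane}), a cluster of $k$ nodal fibers with common vanishing cycle $c$ bounds a chain of $k-1$ embedded symplectic spheres of self-intersection $-2$ (the matching cycles between consecutive nodes) inside $X$; together with the ambient exceptional spheres of $\CP\#17\CPb$ and sections of $f$, these give the symplectic configuration we will surger.

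First, I would perform $16$ symplectic blow-ups of $X$ and then one symplectic blow-down, all at points of (or infinitely near points of) this configuration, so arranged that the proper transforms and exceptional divisors reassemble into a configuration $\mathcal{C}$ of embedded symplectic spheres inside $Z_0 := \CP\#32\CPb$, where $\mathcal{C}$ is a disjoint union of linear plumbings of Fintushel--Stern type $C_{p_i}$ (equivalently, Park's configurations bounding rational homology balls $B_{p_i}$) with $\sum_i(p_i-1) = 27$. The content here is to choose the blow-up points so that the clustered $(-2)$--spheres are promoted to spheres of the required self-intersections $-(p_i+2)$ by absorbing exceptional spheres, while keeping the resulting plumbings disjoint and linear; this is precisely where the specific positive factorization of Lemma~\ref{lem:genus=3} is used, and verifying that $16$ blow-ups and one blow-down suffice is the first place care is needed (note the numerics force exactly this: $17 + 16 - 1 = 32$).

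Next, by Symington's theorem each component of $\mathcal{C}$ may be rationally blown down symplectically, after isotoping $\mathcal{C}$ to be $\omega$--orthogonal, yielding a closed symplectic $4$--manifold $(Z,\omega)$. Since a $C_p$--blowdown fixes $\chi_h$ and raises $c_1^2$ by $p-1$, we get $\chi_h(Z) = \chi_h(Z_0) = 1$ and $c_1^2(Z) = -23 + 27 = 4$, i.e.\ $\chi(Z) = 8$ and $\sigma(Z) = -4$; as an indefinite unimodular form of signature $-4$ is necessarily odd, $Z$ has the same Euler characteristic, signature, and type as $\CP\#5\CPb$, so once simple connectivity is established Freedman's theorem gives a homeomorphism $Z \approx \CP\#5\CPb$. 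For $\pi_1$: we have $\pi_1(X) = 1$, blow-ups and blow-downs preserve $\pi_1$, and $\pi_1(Z) = \pi_1\!\left(Z_0 \setminus \nu\mathcal{C}\right)/N$ with $N$ normally generated by the images of the generators of $\pi_1(\partial B_{p_i}) \cong \Z/p_i^2$; one checks this quotient is trivial by exhibiting explicit spheres in $Z_0 \setminus \nu\mathcal{C}$ --- a regular fiber, a surviving section, leftover exceptional spheres --- whose meridians already kill these classes. This $\pi_1$--computation through the rational blowdown is, as always in such constructions, the most delicate piece of bookkeeping.

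It remains to show that $Z$ is minimal, for then, since $\CP\#5\CPb$ is a rational surface and in particular not minimal, $Z$ cannot be diffeomorphic to it and we obtain an exotic copy. Minimality I would deduce by computing the Seiberg--Witten invariants of $Z$ from the rational blowdown surgery formula (Fintushel--Stern for $C_p$, Park in general) applied to $Z_0$, in the same spirit as the basic-class computations behind Theorem~\ref{thm:gplane} and Proposition~\ref{newishhomotopye(n)}, and checking that the resulting set of basic classes is incompatible with the blow-up formula --- so $Z$ admits no symplectic $(-1)$--sphere --- or, alternatively, by invoking a minimality criterion for symplectic rational blowdowns. Putting these together, $Z$ is a minimal, simply connected, symplectic $4$--manifold homeomorphic but not diffeomorphic to $\CP\#5\CPb$, realized as the rational blowdown of a blow-up of the genus--$3$ Lefschetz fibration $(X,f)$ on the standard rational surface $\CP\#17\CPb$. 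I expect the main obstacle to be the interplay of the last two technical points: arranging the $16$ blow-ups so that $\mathcal{C}$ is genuinely a union of $C_p$--configurations of total length $27$, and then controlling both $\pi_1$ and the Seiberg--Witten invariants through the blowdown.
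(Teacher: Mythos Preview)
Your overall architecture---start from the Lemma~\ref{lem:genus=3} fibration on $\CP\#17\CPb$, do $16$ blow-ups and one blow-down to land in $\CP\#32\CPb$, rationally blow down a $27$--vertex configuration, then check homeomorphism via Freedman and exoticity/minimality via Seiberg--Witten---matches the paper. But two of your concrete choices diverge from what actually works there, and as stated they constitute a gap rather than an alternative route.

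First, the configuration $\mathcal{C}$ is \emph{not} a disjoint union of Fintushel--Stern $C_{p_i}$ chains. The paper produces a \emph{single connected} linear plumbing $P$ with weights
\[
(\underbrace{-2,\ldots,-2}_{5},\,-3,\,\underbrace{-2,\ldots,-2}_{3},\,-20,\,\underbrace{-2,\ldots,-2}_{15},\,-6,\,-7),
\]
whose boundary is the lens space $L(585^2,291914)$; this bounds a rational ball by Lisca/Park, and is blown down symplectically via Symington. Your numerology $\sum_i(p_i-1)=27$ recovers the correct vertex count, but no disjoint family of $C_p$'s is exhibited, and the $(-20)$, $(-6)$, $(-7)$ weights make clear that the paper's chain is not of that shape. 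Second, and relatedly, the raw material is more than ``a cluster of parallel copies of a single curve $c$'': the paper uses \emph{both} clusters in the factorization of Lemma~\ref{lem:genus=3}. The first, $t_1^{14}t_at_bt_4t_6$, gives a singular fiber $F_1$ consisting of a string of fifteen $(-2)$--spheres together with an immersed $(-2)$--sphere; the second, $(t_1t_3t_5t_7)^{t_2^{-1}t_5t_6^{-1}}$, gives a fiber $F_2$ of two $(-4)$--spheres meeting four times; and the boundary twists $t_{\delta_1}t_{\delta_2}$ supply two $(-1)$--sections threading both fibers. All of these (and their explicit homology classes in $H_2(\CP\#17\CPb)$, computed by blowing down to $S^2\times S^2$) are needed to assemble $P$ after the $16$ blow-ups and one blow-down. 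Your sketch omits $F_2$ and the sections entirely.

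One smaller point: the paper separates the exoticity and minimality arguments. Non-diffeomorphism is established by showing $K_X\cdot\omega_X>0$ via an explicit restriction computation of $K|_P\cdot\omega|_P$ using the intersection matrix of $P$ (so $X$ has nonnegative symplectic Kodaira dimension, unlike $\CP\#5\CPb$); minimality is then proved by an Ozsv\'ath--Szab\'o style, computer-assisted enumeration showing that $X$ has a unique Seiberg--Witten basic class up to sign. Your plan of deducing exoticity directly from minimality is logically sound, but you should be aware that the minimality proof in this setting is a substantial computation (millions of candidate adjunctive classes whittled down via the gluing and wall-crossing formulas), not a soft consequence of the blowdown formula. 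Likewise, the $\pi_1$ step is carried out concretely with the meridians $\mu_i$ of the plumbing curves and the leftover exceptional spheres $e_{29}$, $e_{30}$ in the complement, rather than by citing a generic fiber or section.
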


Peculiarly,  yet  another reason for us to choose this example is that only a slight potential improvement of the positive factorization we employed in the proof of this theorem would result in an exotic $\CP \# 4\CPb$; see Remark~\ref{improve}.  In fact, we expect the answer to the existence part of Question~2 to be yes,  despite our own efforts thus far falling short, noting that our investigation has mostly focused on configurations in low genera fibrations. 

\medskip
Our paper is organized as follows. In Section \ref{sec:monodromy}, we construct positive factorizations for Lefschetz fibrations with clusters of nodes; see Lemmas~\ref{lem:genus=g} and~\ref{lem:genus=3}.  We use these factorizations to then prove Theorem \ref{thm:gplane} and Proposition \ref{newishhomotopye(n)} in Section \ref{sec:geography},  and Theorem \ref{thm:exotic} in Section \ref{sec:exotic}.  Throughout the article, any $4$--manifold we consider will be compact,  connected,  smooth, oriented and without boundary, unless explicitly stated otherwise. 

\vspace{0.1in}
\noindent \textit{Acknowledgements. } R.I.B. was  supported by the  NSF grant  DMS-2005327.   M. K. thanks UMass Amherst for their generous support and wonderful research environment during this project.

\section{Clustering nodes in higher genera Lefschetz fibrations}\label{sec:monodromy} 

Let $\Sigma_g^m$ denote a compact connected orientable surface of genus $g$ with $m$ boundary components and let $\M(\Sigma_g^m)$ be its \textit{mapping class group}, the group of isotopy classes of orientation-preserving diffeomorphisms of $\Sigma_g^m$ that restrict to identity on the boundary $\partial \Sigma_g^m$.  We write $\Sigma_g$ for $\Sigma_g^0$.  For a simple closed curve $c$ on $\Sigma_g^m$,  we denote the positive (right-handed) Dehn twist along $c$ by $t_c$.  The conjugation of a group element $\psi$ by $\phi$, namely $\phi \, \psi \, \phi^{-1}$,  will be written as  $\psi^\phi$.  A conjugate of a positive Dehn twist satisfies $t_c^\phi=t_{\phi(c)}$,  so it is also a positive Dehn twist, for any $\phi \in \M(\Sigma_g^m)$.  The reader should not confuse the  power $t_c^k$ of a Dehn twist with the conjugate element $t_c^\phi$,  which can be differentiated by $k$ always denoting an integer and $\phi$  a mapping class.

Let  $c_1,\ldots,c_{2g+1}$ denote the curves on $\Sigma_g$ depicted in Figure~\ref{fig:genus=g} and let us set $t_i=t_{c_i}$ for a short-hand notation.  The element
\[h=t_1t_2\cdots t_{2g}t_{2g+1}t_{2g+1}t_{2g}\cdots t_2t_1 \,, \] 
 is the hyperelliptic involution on $\Sigma_g$,  which fixes each $c_i$ setwise.   We thus obtain a positive factorization of a genus--$g$ Lefschetz fibration from $h^2=1$, which is known to be a genus--$g$ hyperelliptic Lefschetz fibration on $\CP\#(4g+5)\CPb$.

 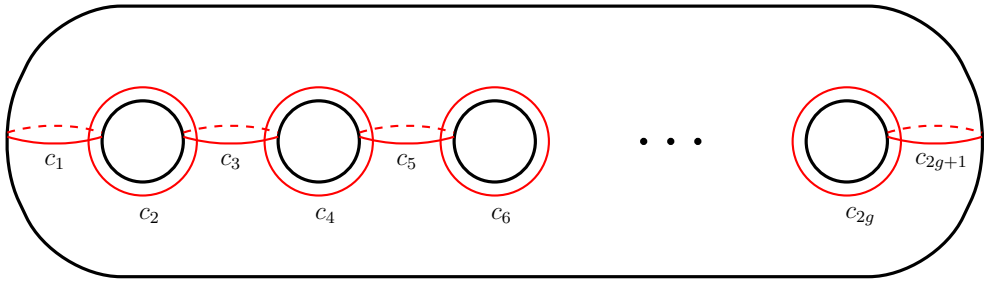
\begin{figure}[h!]
	\begin{tikzpicture}[scale=1.5]
	\begin{scope} [xshift=0cm, yshift=0cm, scale=0.6]   6.15
	\draw[very thick,rounded corners=25pt] (6.5,2) --(-6.5,2) --(-7.45, 0)--(-6.5,-2) -- (6.5,-2)--(7.45,0) --cycle;
	\draw[very thick, xshift=-5.2cm] (0,0) circle [radius=0.6cm];
	\draw[very thick, xshift=-2.6cm] (0,0) circle [radius=0.6cm];
	\draw[very thick, xshift=0cm] (0,0) circle [radius=0.6cm];
	\draw[very thick, xshift=5.2cm] (0,0) circle [radius=0.6cm];
	\filldraw[very thick, xshift=2.2cm] (0,0) circle [radius=0.03cm];
	\filldraw[very thick, xshift=2.6cm] (0,0) circle [radius=0.03cm];
	\filldraw[very thick, xshift=3cm] (0,0) circle [radius=0.03cm];
	\draw[thick, red, xshift=-5.2cm] (0,0) circle [radius=0.8cm];
	\draw[thick, red, xshift=-2.6cm] (0,0) circle [radius=0.8cm];
	\draw[thick, red, xshift=0cm] (0,0) circle [radius=0.8cm];
	\draw[thick, red, xshift=5.2cm] (0,0) circle [radius=0.8cm];
	\draw[thick, red, rounded corners=8pt, xshift=-7.8cm, yshift=0.1cm]             (0.6,-0.03) -- (0.9,-0.13)--(1.7,-0.13) --(2,-0.03);
	\draw[thick,  red, dashed, rounded corners=8pt, xshift=-7.8cm, yshift=0.1cm] (0.6,0.03) -- (0.9,0.13)--(1.7,0.13) --(2,0.03);
	\draw[thick, red, rounded corners=8pt, xshift=-5.2cm, yshift=0.1cm]             (0.6,-0.03) -- (0.9,-0.13)--(1.7,-0.13) --(2,-0.03);
	\draw[thick,  red, dashed, rounded corners=8pt, xshift=-5.2cm, yshift=0.1cm] (0.6,0.03) -- (0.9,0.13)--(1.7,0.13) --(2,0.03);
	\draw[thick, red, rounded corners=8pt, xshift=-2.6cm, yshift=0.1cm]             (0.6,-0.03) -- (0.9,-0.13)--(1.7,-0.13) --(2,-0.03);
	\draw[thick,  red, dashed, rounded corners=8pt, xshift=-2.6cm, yshift=0.1cm] (0.6,0.03) -- (0.9,0.13)--(1.7,0.13) --(2,0.03);
	\draw[thick, red, rounded corners=8pt, xshift=5.2cm, yshift=0.1cm]             (0.6,-0.03) -- (0.9,-0.13)--(1.7,-0.13) --(2,-0.03);
	\draw[thick,  red, dashed, rounded corners=8pt, xshift=5.2cm, yshift=0.1cm] (0.6,0.03) -- (0.9,0.13)--(1.7,0.13) --(2,0.03);
	\node[scale=0.8] at (-6.5,-0.3) {$c_1$};
	\node[scale=0.8] at (-3.9,-0.3) {$c_3$};
	\node[scale=0.8] at (-1.3,-0.3) {$c_5$};
	\node[scale=0.8] at (6.6,-0.3) {$c_{2g+1}$};
	\node[scale=0.8] at (-5.1,-1.1) {$c_2$};
	\node[scale=0.8] at (-2.5,-1.1) {$c_4$};
	\node[scale=0.8] at (0.1,-1.1) {$c_6$};
	\node[scale=0.8] at (5.4,-1.1) {$c_{2g}$};
	\end{scope}
	\end{tikzpicture}
	\caption{Curves on $\Sigma_g$. } \label{fig:genus=g}
\end{figure}

\begin{lemma} 
For any  integers $p, q \geq 0$ and $g \geq 1$ satisfying $p+q=4g+4$,  there are positive factorizations in $\M(\Sigma_g)$ of the form
\[t_1^{p} \cdot t_3^{q} \cdot D_{p,q, g}   =1 \, , \]
where $D_{p,q, g}$ is a product of $4g$  positive Dehn twists, and $t_i=t_{c_i}$ for $c_i$ as  in Figure~\ref{fig:genus=g}.
\label{lem:genus=g}
\end{lemma}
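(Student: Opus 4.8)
The plan is to obtain every one of these factorizations from the hyperelliptic factorization $h^2=1$, a positive factorization of length $8g+4$, using only Hurwitz moves: the braid relation $t_jt_{j+1}t_j=t_{j+1}t_jt_{j+1}$ between consecutive chain twists, the commutation relation $t_it_k=t_kt_i$ for $|i-k|\ge 2$, and cyclic rotation of the word. Each such move preserves the property of being a positive factorization of the identity and preserves the total length $8g+4=(p+q)+4g$. Hence, once the copies of $t_1$ and $t_3$ have been moved to the front, the remaining factor $D_{p,q,g}$ is automatically a product of $4g$ positive Dehn twists and nothing further needs to be checked about it.

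The first step is to record how the basic braid move acts on the multiplicities. Writing $m_i$ for the number of occurrences of $t_{c_i}$ in the current word, a single application of $t_{j+1}t_jt_{j+1}\to t_jt_{j+1}t_j$ --- set up, when needed, by commuting a $t_{j+1}$ into position past twists disjoint from $c_{j+1}$ --- has the effect $(m_j,m_{j+1})\mapsto(m_j+1,m_{j+1}-1)$, and its inverse has the opposite effect. Chaining these along the chain transports one unit of multiplicity from any $c_k$ all the way down to $c_1$ (or to $c_3$) with no net change in the multiplicities of the intermediate curves and no change in the length.

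The heart of the argument is then a combinatorial claim: starting from the multiplicity vector $(m_1,\dots,m_{2g+1})=(4,4,\dots,4)$ of $h^2$, one can reach, by a legal sequence of such migrations, a word with $m_1=p$, $m_3=q$, $m_2=m_4=0$, and $\sum_{i\ge 5}m_i=4g$ (which is consistent since $p+q+4g=8g+4$). For instance, one empties $c_4$ by migrating its four units onto $c_3$ (or onto $c_5$ when $g=2$); one empties $c_2$ by migrating its four units onto $c_1$ and $c_3$, using the passage through $c_2$ also to redistribute multiplicity between $c_1$ and $c_3$ until $m_1,m_3$ reach their targets, all before $c_2$ is emptied; and, when $g>3$, one migrates the surplus $4g-12$ units on $c_5,\dots,c_{2g+1}$ down toward $c_1,c_3$. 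Once such a word is in hand, $c_2$ and $c_4$ no longer occur, so $t_{c_1}$ commutes with every remaining twist and $t_{c_3}$ commutes with all of $t_{c_1},t_{c_5},\dots,t_{c_{2g+1}}$; commuting all copies of $t_1$ to the front and then all copies of $t_3$ yields $t_1^{p}t_3^{q}D_{p,q,g}=1$ with $D_{p,q,g}$ the leftover $4g$ twists, supported on $c_5,\dots,c_{2g+1}$.

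The step I expect to be the real obstacle is this combinatorial claim: one must order the migration moves so that every intermediate word is admissible (for example, one never calls for the subword $t_{j+1}t_jt_{j+1}$ when $c_{j+1}$ has too small a multiplicity nearby, and one never routes multiplicity through a curve already emptied), while still arriving at a configuration with $m_2=m_4=0$ so the final collection step is possible. Making this precise requires carrying along the actual word produced by $h^2$ together with the positions of the $c_i$ in Figure~\ref{fig:genus=g}; everything else is routine. Note also that the scheme uses $c_5$ as a reservoir, so it is tailored to $g\ge 2$ --- the clustering range of interest --- and a genus-$1$ instance, where only $c_1,c_2,c_3$ are available, would need separate treatment.
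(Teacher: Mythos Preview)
Your proposal has a genuine and fatal gap: the target configuration you aim for does not exist in $\M(\Sigma_g)$. You want all $4g$ twists in $D_{p,q,g}$ to lie along the chain curves $c_5,\dots,c_{2g+1}$ (having emptied $c_2$ and $c_4$), so that $t_1$ and $t_3$ commute with $D_{p,q,g}$ and can be collected at the front. But then the relation $t_1^{p}t_3^{q}D_{p,q,g}=1$ forces $D_{p,q,g}=t_1^{-p}t_3^{-q}$, and this is impossible for $p+q>0$: every generator $t_5,\dots,t_{2g+1}$ fixes the homology class of $c_2$, hence so does $D_{p,q,g}$, whereas for $g\ge 2$ the classes $[c_1],[c_3]\in H_1(\Sigma_g)$ are linearly independent (they pair differently with $[c_4]$) and $t_1^{-p}t_3^{-q}$ moves $[c_2]$ by $\pm p[c_1]\pm q[c_3]\neq 0$. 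For $g=2$ this is especially transparent: your $D_{p,q,2}$ would have to be $t_5^{8}$, and $t_1^{p}t_3^{q}t_5^{8}=1$ contradicts the fact that Dehn twists along the pairwise disjoint, pairwise non-isotopic curves $c_1,c_3,c_5$ generate a free abelian group. So the ``combinatorial claim'' you flag as the obstacle is not merely unproved---it is false, and no ordering of the migration moves can rescue it.

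The paper's proof also starts from $h^2=1$ and also uses braid relations, commutations, and cyclic permutation, but it does \emph{not} confine the twist curves to the chain $c_1,\dots,c_{2g+1}$. At the key steps it passes a twist over its neighbor by conjugation, producing factors such as $t_i^{t_{i+1}}=t_{t_{i+1}(c_i)}$; the final $D_{p,q,g}$ is a product of $4g$ Dehn twists along curves like $t_2(c_1),\,t_3(c_2),\dots$, several of which genuinely intersect $c_1$ or $c_3$. That intersection is exactly what allows a nontrivial relation of the form $t_1^{p}t_3^{q}D_{p,q,g}=1$ to hold. Your multiplicity bookkeeping, which only records how many times each chain curve occurs, discards precisely the information needed both to see this obstruction and to circumvent it.
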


\begin{proof}
The statement will follow from a sequence of manipulations  in $\M(\Sigma_g)$:
\begin{alignat*}{10}
1 &=  h^2\\
&=  h\cdot (t_1t_2\cdots t_{{2g+1}} t_{{2g+1}}\cdots t_2t_1 )   \\
&=  t_1t_2\cdots t_{{2g+1}}\cdot h \cdot  t_{{2g+1}}\cdots t_2t_1 \, , \\
&=  t_1t_2\cdots t_{{2g+1}}\cdot  ( t_1t_2\cdots t_{{2g+1}} t_{{2g+1}}\cdots t_2t_1) \cdot  t_{{2g+1}}\cdots t_2t_1  \, , \\
\intertext{which follows from  $h$ fixing every $c_i$ and therefore commuting with every $t_{c_i}$. Since $t_1$ commutes with $t_j$ for all $j>2$, the two $t_1$ elements on the ends of the factorization $(t_1t_2\cdots t_{{2g+1}} t_{{2g+1}}\cdots t_2t_1)$ in the middle of the last equation above can be moved outward to the third and third-to-last positions, obtaining the equation}  
&=t_1 (t_2t_1) t_3 \cdots t_{{2g+1}}\cdot  (t_2\cdots t_{{2g+1}} t_{{2g+1}}\cdots t_2) \cdot  t_{{2g+1}}\cdots t_3 (t_1t_2)t_1\\  
\intertext{Iterating the same step for each $t_i$, for $i=1, \ldots, 2g$,  yields the next equality:}
&=t_1  (t_2t_1) (t_3t_2)\cdots (t_{{2g+1}}t_{{2g}}) t_{{2g+1}}   \cdot  t_{{2g+1}}   (t_{{2g}} t_{{2g+1}})\cdots (t_2t_3)(t_1t_2) t_1\\
\intertext{After a cyclic permutation we can bring $t_1$ on the left to the far right, recalling that the whole product is equal to the identity.  Applying a sequence of braid relations  $t_i t_{i+1} t_i = t_{i+1} t_i  t_{i+1}$ we can then carry each $t_{2g+1}$ in the middle of the factorization all the way to the right to derive
}  
&=  (t_2t_1) (t_3t_2)\cdots (t_{{2g+1}}t_{{2g}}) t_{{2g+1}}   \cdot  t_{{2g+1}}   (t_{{2g}} t_{{2g+1}})\cdots (t_2t_3)(t_1t_2)\cdot  t_1^2\\
&=   (t_2t_1) (t_3t_2)\cdots (t_{{2g+1}}t_{{2g}})  \cdot   (t_{{2g}} t_{{2g+1}})\cdots (t_2t_3)(t_1t_2) \cdot t_1^4\\
\intertext{Now,  conjugating $t_1$ with $t_2$, we can move  $t_2$ on the far left to the second position to obtain}
&=   t_1^{t_2} t_2 (t_3t_2)\cdots (t_{{2g+1}}t_{{2g}}) \cdot    (t_{{2g}} t_{{2g+1}})\cdots (t_2t_3)(t_1t_2)\cdot  t_1^4\\
\intertext{Using braid relations again,  we then carry this $t_2$ first to the very center as $t_{2g+1}$ and then to far right as $t_1$  as follows } 
&=   t_1^{t_2}(t_3t_2)\cdots (t_{{2g+1}}t_{{2g}}) \cdot   t_{{2g+1}} \cdot  (t_{{2g}} t_{{2g+1}})\cdots (t_2t_3)(t_1t_2)\cdot  t_1^4\\
&=   t_1^{t_2} (t_3t_2)\cdots (t_{{2g+1}}t_{{2g}}) \cdot    (t_{{2g}} t_{{2g+1}})\cdots (t_2t_3)(t_1t_2)\cdot  t_1^5\\
\intertext{Repeating the same steps this time for the leftmost $t_3$ we obtain}
&=   t_1^{t_2} t_2^{t_3}(t_4t_3)\cdots (t_{{2g+1}}t_{{2g}})    \cdot    (t_{{2g}} t_{{2g+1}})\cdots (t_2t_3)(t_1t_2)\cdot  t_1^6\\
\intertext{Iterate the same sequence of modifications for each $t_{i+1}$ that appears in a pair $(t_{i+1} t_i)$ (each grouped  in separate parentheses) on the left half of the factorization,  for $i=1, 2, \ldots, 2g+1$, in this order.  Then repeat for each $t_i$ that appear in pairs $(t_{i} t_{i+1})$ on the right half of the factorization, for $i=2g, \ldots, 2, 1$, in this order. We obtain}
&=  t_1^{t_2} t_2^{t_3} t_3^{t_4}\cdots t_{{2g}}^{t_{2g+1}}  \cdot    t_{2g+1}^{t_{{2g}}} \cdots t_3^{t_2} t_2^{t_1}\cdot  t_1^{4g+4}  \\
\intertext{Finally we observe that $(t_3^{t_2} t_2^{t_1}) t_1= t_3 (t_3^{t_2} t_2^{t_1})$ by braid relations.  
For  $p+q=4g+4$, repeating this $q$ times we get}
&=  t_1^{t_2} t_2^{t_3} t_3^{t_4}\cdots t_{{2g}}^{t_{2g+1}}  \cdot    t_{2g+1}^{t_{{2g}}} \cdots  t_4^{t_3}\cdot  t_3^{q} \cdot  (t_3^{t_2} t_2^{t_1})\cdot  t_1^{p}\\
&=  t_1^{t_2} t_2^{t_3} t_3^{t_4}\cdots t_{{2g}}^{t_{2g+1}}  \cdot    t_{2g+1}^{t_{{2g}}} \cdots  t_4^{t_3}\cdot    t_3^{t_2 t_3^{q}} t_2^{t_1t_3^{q}}   \cdot  t_3^{q}\cdot t_1^{p} \\
&=  t_1^{p}\cdot t_3^{q}  \cdot D_{p, q, g}
\end{alignat*}
where the very last equation is obtained by cyclic permutation and also that $t_1^p$  and $t_3^q$ commute with each other. 
\end{proof}

\smallskip
\begin{remark}
We expect that  the positive factorizations \,$t_1^{4g+4} D =1$ in $\M(\Sigma_g)$ we obtained above to be optimal for any $g \geq 2$,  that is,   one cannot perturb the given Lefschetz fibration on $X:=\CP \# \, (4g+5) \CPb$ to cluster identical nodes (i.e.  with isotopic vanishing cycles) on a single singular fiber.  In fact,  a cluster corresponding to the $t_1^{k}$ factor yields a chain of $k-1$ $(-2)$-spheres which span a negative-definite subspace of $V$ of $H_2(X; \R)$ for the intersection form $Q_X$.  Moreover, for the symplectic regular fiber $F$,  we have $[F] \neq 0$ in  $H_2(X; \R)$ and $[F] \in V^{\perp}$,  the  orthogonal complement of $V$ with respect to $Q_X$.  Because $Q_X|_{V^{\perp}}$ is nondegenerate, there is an additional class in
 $V^{\perp}$ with negative square.  Hence $4g+4=b^-(X) \geq k$, which demonstrates that we are at most one off from clustering the maximal number of nodes.   Generally, we conjecture that for a fiber sum indecomposable (see e.g. \cite{baykurfibersum} for the definition) genus $g \geq 2$ Lefschetz fibration,  the maximal number of  nodes one can cluster like this is $4g+4$,  realized by the fibrations we get on the rational surface $\CP \# \, (4g+5) \CPb$.  Note that this number is $4g+5$ when $g=1$.
\end{remark}

We will now construct a particular positive factorization for a genus--$3$ Lefschetz fibration.  In this case, for  the rational blowdown configurations we desire to get,  it will be essential to identify some sections as well.  We will thus produce a positive factorization of the boundary multi-twist $t_{\delta_1} t_{\delta_2}$ in $\M(\Sigma_{3}^2)$.  (See e.g. \cite{baykurkorkmaz} for how boundary twists yield sections.)

 \begin{figure}[h!]
\begin{tikzpicture}[scale=1.5]
\begin{scope} [xshift=0cm, yshift=0cm, scale=0.6]   6.15
 \draw[very thick,rounded corners=25pt] (-4.8,-2) -- (3.9,-2)--(4.85,0) -- (3.9,2) --(-4.8,2)  ;
 \draw[very thick,  rounded corners=12pt] (-4.8,1) -- (-4.4,0) -- (-4.8,-1)  ;
 \draw[very thick, xshift=-2.6cm] (0,0) circle [radius=0.6cm];
 \draw[very thick, xshift=0cm] (0,0) circle [radius=0.6cm];
 \draw[very thick, xshift=2.6cm] (0,0) circle [radius=0.6cm];
 \draw[very thick]  (-4.8,1.5) ellipse (0.2cm and 0.5cm);
\draw[very thick]  (-4.8,-1.5) ellipse (0.2cm and 0.5cm);

\draw[thick, red, xshift=-2.6cm] (0,0) circle [radius=0.8cm];
\draw[thick, red, xshift=0cm] (0,0) circle [radius=0.8cm];
\draw[thick, red, xshift=2.6cm] (0,0) circle [radius=0.8cm];
  \draw[thick, red, rounded corners=6pt, xshift=-5.1cm, yshift=0.1cm]             (0.6,-0.03) -- (0.9,-0.13)--(1.7,-0.13) --(1.9,-0.03);
 \draw[thick,  red, dashed, rounded corners=6pt, xshift=-5.1cm, yshift=0.1cm] (0.6,0.03) -- (0.9,0.13)--(1.7,0.13) --(1.9,0.03);
  \draw[thick, red, rounded corners=6pt, xshift=-2.6cm, yshift=0.1cm]             (0.6,-0.03) -- (0.9,-0.13)--(1.7,-0.13) --(2,-0.03);
 \draw[thick,  red, dashed, rounded corners=6pt, xshift=-2.6cm, yshift=0.1cm] (0.6,0.03) -- (0.9,0.13)--(1.7,0.13) --(2,0.03);
  \draw[thick, red, rounded corners=6pt, xshift=0cm, yshift=0.1cm]             (0.6,-0.03) -- (0.9,-0.13)--(1.7,-0.13) --(2,-0.03);
 \draw[thick,  red, dashed, rounded corners=6pt, xshift=0cm, yshift=0.1cm] (0.6,0.03) -- (0.9,0.13)--(1.7,0.13) --(2,0.03);
  \draw[thick, red, rounded corners=6pt, xshift=2.6cm, yshift=0.1cm]             (0.6,-0.03) -- (0.9,-0.13)--(1.7,-0.13) --(2,-0.03);
 \draw[thick,  red, dashed, rounded corners=6pt, xshift=2.6cm, yshift=0.1cm] (0.6,0.03) -- (0.9,0.13)--(1.7,0.13) --(2,0.03);

 \draw[thick,  red, rounded corners=6pt, xshift=-2.7cm ] (-0.03,0.6) -- (-0.13, 1)--(-0.13,1.6) --(-0.03,2);
\draw[thick,  red, dashed, rounded corners=6pt, xshift=-2.7cm ] (0.03,0.6) -- (0.13, 1)--(0.13,1.6) --(0.03,2);
 \draw[thick,  red, rounded corners=6pt, xshift=-2.7cm ] (-0.03,-0.6) -- (-0.13, -1)--(-0.13,-1.6) --(-0.03,-2);
\draw[thick,  red, dashed, rounded corners=6pt, xshift=-2.7cm ] (0.03,-0.6) -- (0.13, -1)--(0.13,-1.6) --(0.03,-2);

 \node[scale=.8] at (-3.9,-0.3) {$c_1$};
 \node[scale=0.8] at (-1.3,-0.3) {$c_3$};
 \node[scale=0.8] at (1.3,-0.3) {$c_5$};
 \node[scale=0.8] at (3.9,-0.3) {$c_7$};
 \node[scale=0.8] at (-1.8,-0.8) {$c_2$};
 \node[scale=0.8] at (0.1,-1.1) {$c_4$};
 \node[scale=0.8] at (2.7,-1.1) {$c_6$};
 \node[scale=0.8] at (-2.3,1.3) {$a$};
 \node[scale=0.8] at (-2.3,-1.3) {$b$};
  \node[scale=0.8] at (-4.3,1.5) {$\delta_1$};
  \node[scale=0.8] at (-4.3,-1.5) {$\delta_2$};
\end{scope}

\end{tikzpicture}
\caption{Curves on $\Sigma_3^2$. } \label{fig:genus=3} 
\end{figure}
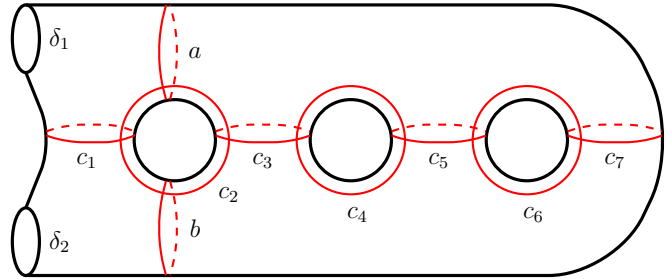

\begin{lemma}
There is a positive factorization in $\M(\Sigma_3^2)$
\[ (t_1^{14} \, t_at_b t_4 t_6) \cdot   (t_1 t_3 t_5t_7)^{ t_2^{-1} t_5 t_6^{-1}}
	\cdot D_6 = t_{\delta_1}t_{\delta_2} \,,  \]
 where $D_6$ is a product of $6$ positive Dehn twists,  $t_i=t_{c_i}$, and $c_i,  a,  b, c, \delta_j$  are as in Figure~\ref{fig:genus=3}. 
\label{lem:genus=3}
\end{lemma}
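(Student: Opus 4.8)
\emph{Proof plan.} I would prove this by rerunning the mapping class group computation behind Lemma~\ref{lem:genus=g} for $g=3$, but carried out in $\M(\Sigma_3^2)$ so that two sections stay visible, and then diverting it so that the surviving vanishing cycles fall into the prescribed clusters.

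The first task is to fix the starting relation with sections. The relevant fibration is the genus-$3$ hyperelliptic Lefschetz fibration on $\CP\#17\CPb$ coming from $h^2=1$ with $h=t_1t_2\cdots t_7t_7\cdots t_2t_1$; it has exactly $28$ nodes, matching $\chi(\CP\#17\CPb)-\chi(\Sigma_3\times S^2)=28$. Realizing the chain $c_1,\dots,c_7$ in a regular neighborhood $\Sigma_3^2$ whose two boundary curves are the $\delta_1,\delta_2$ of Figure~\ref{fig:genus=3}, the hyperelliptic relation lifts to a positive factorization of a boundary multi-twist which, after fixing framings so that the two sections have square $-1$, reads $h^2=t_{\delta_1}t_{\delta_2}$ in $\M(\Sigma_3^2)$ (see \cite{baykurkorkmaz} for the passage between boundary twists and sections). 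Each commutation $[t_{c_i},t_{c_j}]=1$ with $|i-j|\ge 2$, each braid relation $t_{c_i}t_{c_{i+1}}t_{c_i}=t_{c_{i+1}}t_{c_i}t_{c_{i+1}}$, and each cyclic permutation appearing in the proof of Lemma~\ref{lem:genus=g} is a move supported away from $\partial\Sigma_3^2$, and $t_{\delta_1},t_{\delta_2}$ are central, so that proof applies verbatim; specialized to $p=14$, $q=2$ it yields
\[ t_1^{14}\cdot t_3^{2}\cdot D_{14,2,3}=t_{\delta_1}t_{\delta_2}\qquad\text{in }\M(\Sigma_3^2),\]
where $D_{14,2,3}$ is the explicit product of $12$ positive Dehn twists of Lemma~\ref{lem:genus=g}.

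Next I would re-cluster the $14$-twist tail $t_3^{2}\cdot D_{14,2,3}$, leaving the block $t_1^{14}$ untouched, into $t_at_bt_4t_6\cdot (t_1t_3t_5t_7)^{t_2^{-1}t_5t_6^{-1}}\cdot D_6$. Since $c_1,c_3,c_5,c_7$ are pairwise disjoint, the four twists along them inside the tail can be collected into one commuting block $t_1t_3t_5t_7$; conjugating that block as a unit and dragging it across the remaining factors via braid relations is exactly what deposits the conjugator $\phi=t_2^{-1}t_5t_6^{-1}$. The twists $t_4$ and $t_6$ appear as honest (unconjugated) chain twists once their neighbors are braided past them, the curves $a$ and $b$ near the first handle are produced by recombining the leftover $t_2$- and $t_3$-type twists there, and the six factors that then remain, each a conjugate of some $t_{c_i}$ and hence positive, constitute $D_6$. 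One finishes by checking against Figure~\ref{fig:genus=3} that the loose vanishing cycles are precisely $a,b,c_4,c_6$, the four pairwise disjoint curves $\phi(c_1),\phi(c_3),\phi(c_5),\phi(c_7)$, and the six curves of $D_6$.

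The real difficulty is bookkeeping rather than ideas: one must (a) pin down the lifted starting relation $h^2=t_{\delta_1}t_{\delta_2}$ with $\delta_1,\delta_2$ exactly in the positions of Figure~\ref{fig:genus=3}, and (b) carry out the tail rearrangement --- a long string of braid relations, commutations, conjugations and cyclic permutations --- while faithfully tracking every vanishing cycle, so that the ten factors outside the block $t_1^{14}$ really do reorganize into $t_at_bt_4t_6$, the conjugated disjoint block $(t_1t_3t_5t_7)^{\phi}$, and $D_6$ as drawn. Should a pure Hurwitz rearrangement of the tail prove awkward --- for instance in extracting unconjugated $t_4,t_6$, or in manufacturing the curves $a,b$ --- the standard remedy is a lantern substitution on a four-holed sphere embedded in $\Sigma_3^2$ one of whose boundary components is a $\delta_j$, trading a $3$-twist block for a $4$-twist one (and back elsewhere) so as to supply the needed curves without altering the overall twist count; I would expect at most one or two such substitutions to be required.
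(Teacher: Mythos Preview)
Your plan diverges from the paper's proof at the very first step, and the divergence is not cosmetic.

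The paper does \emph{not} start from a lift of $h^2$ to $\M(\Sigma_3^2)$.  It starts from the $7$--chain relation $(t_7\cdots t_1)^8=t_{\delta_1}t_{\delta_2}$, which has $56$ positive twists, rearranges it (exactly as you would in the proof of Lemma~\ref{lem:genus=g}) until a subfactor $(t_7t_6t_5t_4t_3)^6$ appears, and then performs a \emph{$5$--chain substitution} to replace this subfactor by $t_at_b$.  That substitution---not a Hurwitz move, and not a lantern---is what simultaneously drops the length from $56$ to $28$ and introduces the twists along $a$ and $b$, which by construction are the boundary curves of the regular neighborhood of $c_3,\ldots,c_7$.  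Only after this do the Lemma~\ref{lem:genus=g}--style braid and conjugation moves reorganise the remaining $26$ twists into $t_1^{14}$, the cluster $(t_1t_3t_5t_7)^{t_2^{-1}t_5t_6^{-1}}$, the stray $t_4,t_6$, and the tail $D_6$.  You never invoke the $5$--chain relation, so there is no mechanism in your outline that actually manufactures $t_a$ and $t_b$.

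Two subsidiary points.  First, your starting relation ``$h^2=t_{\delta_1}t_{\delta_2}$ in $\M(\Sigma_3^2)$'' with $\delta_1,\delta_2$ the specific curves of Figure~\ref{fig:genus=3} is asserted, not proved; the existence of $(-1)$--sections on the hyperelliptic fibration does not by itself pin down which boundary multi-twist $h^2$ equals in $\M(\Sigma_3^2)$.  Second, your description of how $t_a,t_b$ would appear---``produced by recombining the leftover $t_2$- and $t_3$-type twists''---does not match what $a,b$ actually are (boundaries of a genus--$2$ subsurface containing $c_3,\ldots,c_7$, not local curves near the first handle), and the lantern fallback you propose changes the twist count by $\pm1$ per substitution, so it cannot on its own convert a $28$--twist word into another $28$--twist word while inserting $t_a,t_b$.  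The missing idea is precisely the $5$--chain substitution.
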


\begin{proof} The statement will follow from a sequence of manipulations  this time in $\M(\Sigma_3^2)$. We start with the well-known $7$--chain relation:
\begin{alignat*}{10}
t_{\delta_1}t_{\delta_2}
&= (t_7 \cdots t_2t_1)^8\\
&= (t_7\cdots t_2t_1)(t_7\cdots t_2t_1)(t_7\cdots t_2t_1)(t_7\cdots t_2t_1)(t_7\cdots t_2t_1)(t_7\cdots t_2t_1)(t_7\cdots t_2t_1)(t_7\cdots t_2t_1) \\
\intertext{Note that $t_i (t_7\cdots t_2t_1) = (t_7\cdots t_2t_1) t_{i+1}$ for each $i=1, \ldots, 6$.  We can thus move the $t_1$ in the first  parentheses over the next six $(t_7\cdots t_2t_1)$ factors,  so it becomes $t_7$. We then move the $t_1$ in the second  parentheses over the next five $(t_7\cdots t_2t_1)$ factors,  so it becomes $t_6$.  Repeating this for each $t_1$ in the first six parentheses, we get:
}
&= (t_7\cdots t_3t_2)^7 \cdot (t_1t_2\cdots t_6t_7 \cdot t_7t_6\cdots t_2t_1)\\
\intertext{By the same method, we can now move $t_2$ factors to arrive at the factorization}
&= (t_7t_6t_5 t_4t_3)^6 \cdot  (t_2t_3\cdots t_6t_7 \cdot t_7t_6\cdots t_3t_2)\cdot (t_1t_2\cdots t_6t_7 \cdot t_7t_6\cdots t_2t_1)\\
\intertext{Using the $5$--chain relation to substitute $(t_7t_6t_5 t_4t_3)^6$  with $t_a t_b$, and braid relations to rewrite the rest of the factorization, we obtain}
&= (t_at_b)\cdot (t_1t_2\cdots t_6t_7 \cdot  t_1t_2\cdots t_5t_6 \cdot t_6t_5\cdots t_2t_1 \cdot t_7t_6\cdots t_2t_1)\\
\intertext{We can then use commutativity relations to rearrange the factorization into}
&= (t_at_b  t_1) \cdot  (t_2t_1) (t_3t_2)(t_4t_3)(t_5t_4) (t_6t_5)(t_7t_6)\cdot (t_6t_7) (t_5t_6)(t_4t_5)(t_3t_4) (t_2t_3)(t_1t_2) t_1\\
\intertext{Now,  employing  the same argument we had in the proof of Lemma~\ref{lem:genus=g},  through a sequence of conjugations and braid relations, we can cluster $t_1$s on the right we obtain}
&= (t_at_b   t_1)  \cdot  t_1^{t_2}\cdot t_2^{t_3}\cdot t_3^{t_4}\cdot t_4^{t_5}\cdot t_5^{t_6}\cdot  t_6^{t_7}\cdot  
t_7^{t_6}\cdot  t_6^{t_5}\cdot  t_5^{t_4}\cdot  t_4^{t_3}\cdot t_3^{t_2}\cdot  t_2^{t_1}\cdot  t_1^{13}\\
\intertext{We then bring all the $t_1$ factors together by cyclic permutation of $t_1^{13}$ and the commutativity of the remaining $t_1$ factor with $t_at_b$, which yields}
&= (t_1^{14}t_at_b)   \cdot  t_1^{t_2}\cdot t_2^{t_3}\cdot t_3^{t_4}\cdot t_4^{t_5}\cdot t_5^{t_6}\cdot  t_6^{t_7}\cdot  
  t_7^{t_6}\cdot  t_6^{t_5}\cdot  t_5^{t_4}\cdot  t_4^{t_3}\cdot t_3^{t_2}\cdot  t_2^{t_1} \\
 \intertext{Next carry the $t_5^{t_4}$ factor all the way to the right (note that it commutes with $t_2^{t_1}$) to obtain}
&= (t_1^{14}t_at_b)   \cdot  t_1^{t_2}\cdot t_2^{t_3}\cdot t_3^{t_4}\cdot t_4^{t_5}\cdot t_5^{t_6}\cdot  t_6^{t_7}\cdot  
  t_7^{t_6}\cdot  t_6^{t_5}\cdot  ( t_4^{t_3} t_3^{t_2})^{{t_5^{t_4}}} 
  \cdot  t_2^{t_1} \cdot  t_5^{t_4}\\
\intertext{By cyclic permutation and commutativity of $t_5^{t_4}$ with $t_1^{14}t_a t_b$,  we can then carry the $t_5^{t_4}$ factor and place it in the first cluster to obtain}
&= (t_1^{14}t_at_b t_5^{t_4})  \cdot  t_1^{t_2}\cdot t_2^{t_3}\cdot t_3^{t_4}\cdot t_4^{t_5}\cdot t_5^{t_6}\cdot  t_6^{t_7}\cdot  
  t_7^{t_6}\cdot  t_6^{t_5}\cdot  ( t_4^{t_3} t_3^{t_2})^{{t_5^{t_4}}} \cdot  t_2^{t_1} \\
 \intertext{By moving away the Dehn twists in between by conjugations, we create a second cluster as follows}
&= (t_1^{14}t_at_b t_5^{t_4})  \cdot   (t_2^{t_3}  t_5^{t_6}   t_6^{t_7}t_6^{t_5}t_2^{t_1})
	\cdot D'_6 \\
\intertext{where $D'_6$ denotes the product of the remaining six positive Dehn twists. Here $t_2^{t_3}$ and $t_5^{t_6}$ commute, so we can move the latter into the first cluster to get}
&= (t_1^{14}t_at_b t_5^{t_4} t_5^{t_6}) \cdot   (t_2^{t_1} t_2^{t_3}     t_6^{t_5}t_6^{t_7}) 
	\cdot D'_6 \\
\intertext{In the next step, we  repeatedly make use of the following observation: if $x$ and $y$ intersect at one point, then the braid relation $t_x t_y t_x = t_y t_x t_y$ implies that $t_y^{-1}t_x t_y=  t_x t_y t_x^{-1}$, and in turn, we have $t_x^{t_y^{-1}}= t_y^{t_x}$.  So we can rewrite the last factorization as}
&= (t_1^{14}t_at_b t_4^{t_5^{-1}} t_6^{t_5^{-1}}) \cdot   (t_1^{t_2^{-1}}  t_3^{t_2^{-1} } t_5^{ t_6^{-1}}t_7^{ t_6^{-1}})
	\cdot D'_6 \\
\intertext{Since $t_1^{14} t_a t_b$ commutes with $t_5^{-1}$,  $t_1 t_3$ commutes with $t_6^{-1}$, and $t_5 t_7$ commutes with $t_2^{-1}$, we obtain}	
&= (t_1^{14}t_at_b t_4 t_6)^{t_5^{-1}} \cdot   (t_1 t_3 t_5t_7)^{ t_2^{-1}  t_6^{-1}}
	\cdot D'_6 \  \\
\intertext{We can conjugate this expression with $t_5$ to obtain:} 
&= (t_1^{14}t_at_b t_4 t_6) \cdot   (t_1 t_3 t_5t_7)^{ t_5 t_2^{-1}  t_6^{-1}}
	\cdot  D_6.
\end{alignat*}
Finally, noting  that $t_5$ commutes with $t_2^{-1}$, the factorization in the statement of the lemma follows.
\end{proof}



\section{Geography of minimal symplectic Lefschetz fibrations}\label{sec:geography}

Consider the genus $g \geq 2$ Lefschetz fibration on $\CP\# (4g+5)\CPb$ corresponding to the positive factorization $t_1^{2g+2}\cdot t_3^{2g+2} \cdot D_g =1 $ in $\M(\Sigma_g)$  derived in Lemma \ref{lem:genus=g}, where $D_g:=D_{2g+2, 2g+2, g}$.  Performing an untwisted fiber sum of this fibration  with itself is known to yield the complex surface $E(g+1)$.  (To see it note that this fibration on $\CP\# (4g+5)\CPb$, after a small perturbation,  is isomorphic  to a holomorphic fibration and the untwisted fiber sum of two copies can be realized as a double branched cover ramified along two copies of the regular fiber.)

Instead, we are interested in the result of a twisted fiber sum, which yields a homotopy $E(g+1)$.  After a global conjugation, the monodromy factorization of the above fibration can be written as
\[ A_{g} \cdot t_{a_1}^{2g+2}\cdot t_{a_3}^{2g+2} =1 \, ,\text{ and  also as } t_{b_1}^{2g+2}\cdot t_{b_3}^{2g+2}\cdot B_{g}=1 \text{ in } \M(\Sigma_g) \, ,  \]
where $a_1$, $a_3, b_1, b_3$ are the curves shown in Figure \ref{fig:twistedfibersum} and  $A_{g}$ and $B_g$ are each products of $4g$ positive Dehn twists.  Then, let $(Z_{g+1},  f_{g+1})$ denote the Lefschetz fibration with monodromy factorization 
\[A_{g} \cdot t_{a_1}^{2g+2}\cdot t_{a_3}^{2g+2} \cdot t_{b_1}^{2g+2}\cdot t_{b_3}^{2g+2}\cdot B_{g} =1  \text{ in } \M(\Sigma_g) \, ,  \]
which is a \textit{twisted} fiber sum of two copies of the fibration we had on  $\CP\# (4g+5)\CPb$.  

We claim that $Z_{g+1}$ is simply-connected.  One way to see this is the following: our Lefschetz fibration on $\CP\# (4g+5)\CPb$,  after a small perturbation,  becomes isomorphic to the standard holomorphic Lefschetz fibration,  which has many sections; see e.g. \cite{tanaka}.  Therefore, by Seifert Van-Kampen,  the complement of the regular fiber in the simply-connected \mbox{$4$-manifold} $\CP\# (4g+5)\CPb$ is necessarily simply-connected, and in turn,  the twisted fiber sum $Z_{g+1}$ obtained by gluing the two complements is also simply-connected.


Easy Euler characteristic and signature calculations show  that we have  $\chi(Z_{g+1})=12g+12=\chi(E(g+1))$ and $\sigma(Z_{g+1})=-8g-8=\sigma(E(g+1))$.   Moreover,  the fiber sum $Z_{g+1}$ is 
minimal by  \cite{usherminimal} (also see \cite{baykurfibersum}).
When $g$ is even, $Z_{g+1}$ is certainly not spin,  for instance  by Rokhlin's theorem.  On the other hand, when $g$ is odd,  with some extra care,  one can show that $Z_{g+1}$ is spin using the spin criterion for monodromy factorizations of Lefschetz fibrations in \cite{stipsiczspin, baykurhamada}.  (Here the twisting matters;  one may for instance take the second positive factorization in the fiber sum as the image of the first factorization under the obvious hyperelliptic involution, followed by a cyclic permutation.) Hence by Freedman \cite{freedman},  $Z_{g+1}$ is homeomorphic to $E(g+1)$ for all $g \geq 2$.

\begin{figure}[h!]
	\centering
	\begin{tikzpicture}[scale=1.5]
	
	\begin{scope} [xshift=0cm, yshift=0cm, scale=0.6]   6.15
	\draw[very thick,rounded corners=25pt] (2.7,2) --(-6.5,2) --(-7.45, 0)--(-6.5,-2) -- (2.7,-2)--(3.65,0) --cycle;
	\draw[very thick, xshift=-5.2cm] (0,0) circle [radius=0.6cm];
	\draw[very thick, xshift=-2.6cm] (0,0) circle [radius=0.6cm];
	\draw[very thick, xshift=1.4cm] (0,0) circle [radius=0.6cm];
	\filldraw[very thick, xshift=-1cm] (0,0) circle [radius=0.03cm];
	\filldraw[very thick, xshift=-.6cm] (0,0) circle [radius=0.03cm];
	\filldraw[very thick, xshift=-.2cm] (0,0) circle [radius=0.03cm];
	\draw[thick, red, dashed, rounded corners=8pt, xshift=-5.2cm, yshift=0cm]             (0.03,-0.6) -- (0.13,-0.9)--(0.13,-1.7) --(0.03,-2);
	\draw[thick,  red,  rounded corners=8pt, xshift=-5.2cm, yshift=0cm]         (-0.03,-0.6) -- (-0.13,-0.9)--(-0.13,-1.7) --(-0.03,-2);
	\draw[thick, red, dashed, rounded corners=8pt, xshift=-2.5cm, yshift=0cm]             (0.03,-0.6) -- (0.13,-0.9)--(0.13,-1.7) --(0.03,-2);
	\draw[thick,  red, rounded corners=8pt, xshift=-2.5cm, yshift=0cm]         (-0.03,-0.6) -- (-0.13,-0.9)--(-0.13,-1.7) --(-0.03,-2);
	\draw[thick, blue, dashed, rounded corners=8pt, xshift=-5.2cm, yshift=0cm]             (0.03,0.6) -- (0.13,0.9)--(0.13,1.7) --(0.03,2);
	\draw[thick,  blue,  rounded corners=8pt, xshift=-5.2cm, yshift=0cm]         (-0.03,0.6) -- (-0.13,0.9)--(-0.13,1.7) --(-0.03,2);
	\draw[thick, blue, dashed, rounded corners=8pt, xshift=-2.5cm, yshift=0cm]             (0.03,0.6) -- (0.13,0.9)--(0.13,1.7) --(0.03,2);
	\draw[thick,  blue, rounded corners=8pt, xshift=-2.5cm, yshift=0cm]         (-0.03,0.6) -- (-0.13,0.9)--(-0.13,1.7) --(-0.03,2);
	
	\node[scale=0.8] at (-5.7,-1.3) {$a_1$};
	\node[scale=0.8] at (-3,-1.3) {$a_3$};
	\node[scale=0.8] at (-5.7,1.2) {$b_1$};
	\node[scale=0.8] at (-3,1.2) {$b_3$};
	\end{scope}
	\end{tikzpicture}
\caption{Lantern curves in the twisted fiber sum $(Z_{g+1}, f_{g+1})$. }
 \label{fig:twistedfibersum}
\end{figure}
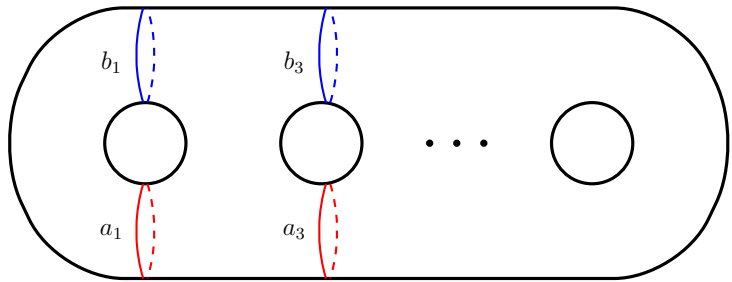

We will next show that $Z_{g+1}$ is not diffeomorphic to $E(g+1)$. A natural followup question is whether it is diffeomorphic to a well-known homotopy elliptic surface in the literature.  Recall that a homotopy $E(n)$ obtained by logarithmic transformations  for relatively prime $p,q\ge 1$ is denoted by $E(n)_{p,q}$,  and the one obtained by knot surgery along a generic elliptic fiber is denoted by $E(n)_K$.   While any $E(n)_{p,q}$ is  symplectic,  $E(n)_K$ is symplectic if and only if $K$ is fibered, and when that is the case, $E(n)_K$ admits a genus $2l+(n-1)$ Lefschetz fibration \cite{fintushelstern, FSknotsurgery, fintushelsternfamilies}.  

\begin{prop} \label{newishhomotopye(n)}
	If $g>3$, then $Z_{g+1}$ is not diffeomorphic to $E(g+1)_{p,q}$ for any $p,q\ge 1$ or $E(g+1)_K$ for any knot $K$. In particular,  $Z_{g+1}$ is not diffeomorphic to $E(g+1)$. 
\end{prop}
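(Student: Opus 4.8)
The plan is to distinguish these manifolds by the divisibility $\mathrm{div}(\cdot)$ of the symplectic canonical class. The point is that $Z_{g+1}$, being built from a Lefschetz fibration carrying the $(-4)$--spheres we later rationally blow down, has a canonical class of very small divisibility, whereas the canonical classes of $E(g+1)_{p,q}$ and $E(g+1)_K$ are always highly divisible. First I would record the upper bound for $Z_{g+1}$. As already established, $Z_{g+1}$ is a simply connected minimal symplectic $4$--manifold with $b^+(Z_{g+1})=2g+1>1$, so by Taubes its canonical class $K:=K_{Z_{g+1}}$ is a Seiberg--Witten basic class, and $K\ne 0$ since $K\cdot[F]=2g-2\ne 0$ for a regular fiber $F$ of $f_{g+1}$. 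By construction $(Z_{g+1},f_{g+1})$ contains a symplectically embedded $(-4)$--sphere $S$ (one of the $2g+2$ disjoint ones used for the blowdowns), so the adjunction equality gives $-2=[S]^2+K\cdot[S]=-4+K\cdot[S]$, i.e.\ $K\cdot[S]=2$. Writing $K=d\gamma$ with $\gamma\in H_2(Z_{g+1};\Z)$ primitive and $d=\mathrm{div}(K)\ge 1$, we get $d\mid K\cdot[S]=2$, so $\mathrm{div}(K_{Z_{g+1}})\in\{1,2\}$.

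Next I would establish the lower bound for the standard homotopy elliptic surfaces, all of which have $b^+=2(g+1)-1>1$. If $\gcd(p,q)>1$ then $E(g+1)_{p,q}$ is not simply connected, hence not diffeomorphic to $Z_{g+1}$, so we may assume $\gcd(p,q)=1$. By the Fintushel--Stern log transform formula every Seiberg--Witten basic class of $E(g+1)_{p,q}$ is a multiple of the square--zero multiple--fiber class, and $K_{E(g+1)_{p,q}}=(g-1)[F]+(p-1)[F_p]+(q-1)[F_q]=\big((g+1)pq-p-q\big)[F_{pq}]$; hence $\mathrm{div}(K_{E(g+1)_{p,q}})\ge (g+1)pq-p-q\ge g-1$, with equality exactly for $p=q=1$ (so $\mathrm{div}(K_{E(g+1)})=g-1$). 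Similarly, by the knot surgery formula $\SW_{E(g+1)_K}=\SW_{E(g+1)}\cdot\Delta_K$, every basic class of $E(g+1)_K$ is a multiple of the primitive fiber class $[F]$; since the support of $\SW_{E(g+1)}$ is $\{k[F]:|k|\le g-1\}$ and multiplying by $\Delta_K$ only widens this symmetric interval, the extreme basic class --- which, when this manifold is symplectic, is its canonical class by Taubes --- has divisibility $\ge g-1$.

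Now suppose $Z_{g+1}$ were diffeomorphic to $X$, one of $E(g+1)_{p,q}$ or $E(g+1)_K$. Transporting the symplectic structure of $Z_{g+1}$ to $X$ shows $X$ is symplectic with canonical class the image of $K$; since for a symplectic $4$--manifold with $b^+>1$ the canonical class is, up to sign, the extreme Seiberg--Witten basic class --- and the Seiberg--Witten invariant is a smooth invariant --- it agrees up to sign with the canonical class of $X$ identified above. Hence $\mathrm{div}(K_{Z_{g+1}})=\mathrm{div}(K_X)\ge g-1$. But $g>3$ forces $g-1\ge 3>2\ge\mathrm{div}(K_{Z_{g+1}})$, a contradiction. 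Therefore $Z_{g+1}$ is diffeomorphic to no $E(g+1)_{p,q}$ and no $E(g+1)_K$; in particular, taking $p=q=1$, it is not diffeomorphic to $E(g+1)$.

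The argument is conceptually short, so I expect the work to lie in the supporting inputs. The most delicate point is the claim that the relevant $(-4)$--spheres in $Z_{g+1}$ are genuinely symplectically embedded, so that the adjunction equality applies with the correct sign --- but this is part of the construction of $(Z_{g+1},f_{g+1})$ that we are granted. The remaining ingredients are standard: the log transform and knot surgery Seiberg--Witten formulas together with the ensuing canonical class computations, the elementary remark that a $4$--manifold diffeomorphic to the symplectic $Z_{g+1}$ is itself symplectic (so that Taubes applies to $X$), and the fact that the divisibility of the canonical class of a symplectic $4$--manifold with $b^+>1$ is a diffeomorphism invariant. The numerology $\mathrm{div}(K_{Z_{g+1}})\mid 2$ versus $\mathrm{div}(K_X)\ge g-1$ is precisely what pins the threshold at $g>3$.
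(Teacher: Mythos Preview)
Your proof is correct and follows essentially the same strategy as the paper: both arguments start from the adjunction equality on the symplectic $(-4)$--sphere $S$ to get $K\cdot[S]=2$, and then confront this with the known form of the canonical class of $E(g+1)_{p,q}$ and $E(g+1)_K$, which is a large multiple of a primitive square--zero class. You package this as a divisibility constraint $\mathrm{div}(K_{Z_{g+1}})\le 2$ versus $\mathrm{div}(K_X)\ge g-1$, while the paper computes $K\cdot S$ directly from the explicit formula $K=((g+1)pq-p-q)f$ (respectively $K=(2l+g-1)F$) and reads off $g\le 3$; these are the same argument in slightly different language. One small difference: the paper disposes of non-fibered $K$ by invoking that $E(g+1)_K$ is then non-symplectic, whereas your Seiberg--Witten argument handles all knots uniformly by showing the extreme basic class has divisibility $\ge g-1$; this is a mild streamlining. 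Your explicit appeal to Taubes' constraint (that the canonical class is, up to sign, the basic class maximizing $|\,\cdot\,[\omega]|$) to identify $K$ among the basic classes is also a point the paper leaves implicit.
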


\begin{proof}
Let $K$ denote the canonical class of $Z_g$ and let $S$ denote an embedded symplectic sphere with square $-4$. By the adjunction equality, $\langle K, S\rangle = -\chi(S)-[S]^2=2$.  Suppose $Z_g=E(g+1)_{p,q}$. Then $K=((g+1)pq-p-q)f$, where $f$ is a primitive class such that $pqf=F$ is the elliptic fiber class in $E(g+1)$. Thus $2=\langle K, S\rangle = ((g+1)pq-p-q)f\cdot S$. Thus $(g+1)pq-p-q\in\{1,2\}$. Now since $pq-p-q\ge -1$ and $pq \geq 1$ for all integers $p,q\ge 1$, we necessarily have that $g\le 3$. 
	
Next suppose $Z_{g+1}=E(g+1)_K$.  If $K$ is not a fibered knot, then $E(g+1)_K$ does  not admit a symplectic structure \cite{FSknotsurgery}, so $Z_{g_1}$ could not be diffeomorphic to it.  Assume $K$ is a fibered knot of genus $l \geq 0$.  We first claim that the canonical class must be of the form $K=(2l+g-1)F$, where $F$ is the elliptic fiber class of $E(g+1)$ and $l$ is the genus of $K$. Let $\Sigma$ denote a generic fiber of the symplectic Lefschetz fibration on $E(g+1)_K$, which has genus $2l+g$ and square 0.  From the construction,  we know that $F$ is a bisection of this fibration and $F \cdot \Sigma =2$.  By the adjunction equality, $\langle K, \Sigma\rangle = 2(2l+g)-2=4l+2g-2$. By \cite{FSknotsurgery}, $\mathcal{SW}_{E(g+1)_K}=\mathcal{SW}_{E(g+1)}\cdot\Delta_K(t^2)$, where $t=\exp{(F)}$ and $\Delta_K$ is the symmetrized Alexander polynomial of $K$; consequently, the canonical class $K$ must be of the form $mF$ for some integer $m$. Thus we have $4l+2g-2=m \langle F, \Sigma\rangle = 2m$ and so $m=2l+g-1$, as claimed. Finally, $2=\langle K, S\rangle = (2l+g-1)F\cdot S$,  which implies $g\le 3$.

If we take $p=q=1$ for $E(g+1)_{p,q}$ or $K$ the unknot for $E(g+1)_K$,  we get back the standard $E(g+1)$.  So the arguments above show in particular  that $Z_{g+1}$ is not diffeomorphic to $E(g+1)$. 
\end{proof}

The monodromy factorization of $(Z_{g+1},  f_{g+1})$ can be rewritten as
\begin{equation} \label{blowdownmonodromy}
 A_{g} \cdot (t_{a_1} t_{a_3}  t_{b_1}t_{b_3})^{2g+2}\cdot B_{g} =1 
\text{ in } \M(\Sigma_g) \, , 
\end{equation}
where $a_1, a_3, b_1, b_3$ can be seen to create a lantern configuration; see Figure~\ref{fig:twistedfibersum}.  Equivalently,  clustering the corresponding four nodes on the same singular fiber we get a symplectic $(-4)$-sphere contained in the regular fiber.  
By \cite{endogurtas},  lantern substitution along this configuration will provide a new positive factorization for a symplectic Lefschetz fibration that would be obtained from the former by rationally blowing down the corresponding $(-4)$-sphere.  As the above monodromy factorization shows, there are $2g+2$ of these $(-4)$--spheres in $Z_{g+1}$. 

Set $a=g-1$.  For each pair of  integers $(a,b)$ with $a \geq 3$ and $0<b \leq 2a$,  let $(Z_{a,b}, f_{a,b})$ denote the symplectic Lefschetz fibration obtained by rationally blowing down $b$ of the above $(-4)$--spheres in $(Z_a, f_a)$.  All $Z_{a,b}$ are minimal by   \cite{dorfmeisterminimal} and we have $\chi_h(Z_{a,b})= \chi_h(Z_{a})= a$ and $c_1^2(Z_{a,b})=c_1^2(Z_{a}) +b = b$. 
Thus the collection $$\{Z_{a,b}\text{ }|\text{ } a\ge 3\text{ and } 0 < b\le 2a\}$$ of minimal symplectic 4-manifolds fills the region of the geography plane $\mathcal{R}$ in Figure \ref{fig:gplane}. 

The claim that $Z_{a,b}$ is non-spin follows from the following observation: Each time we apply a lantern substitution along a \,$t_{a_1} t_{a_3}  t_{b_1}t_{b_3}$\, subfactor in the monodromy factorization,  we get a new vanishing cycle that separates the pairs $\{a_1, b_1\}$ and $\{a_3, b_3\}$. This is a separating curve, and after a small perturbation of the fibration, we get a reducible fiber that contains a genus--$1$ (and a genus $g-1$) fiber component with self-intersection $-1$. Hence, the intersection form of the ambient manifold has to be odd. 

It remains to show that $Z_{a,b}$ are all simply-connected.  This is fairly easy to when $b<2a$.  Recall that if $\{v_j\}$ are the vanishing cycles of a genus--$g$ Lefschetz fibration $(X,f)$ with a section,  the fundamental group of $X$ is equal to $\pi_1(\Sigma_g) \, / N$ where $N$ is the subgroup of $\pi_1(\Sigma_g)$ normally generated by $\{v_j\}$.   Therefore,  the vanishing cycles of the Lefschetz fibration we started with on the simply-connected $4$--manifold $\CP \# \, (4g+5) \CPb$ should normally generate all of $\pi_1(\Sigma_g)$.   In turn,  the vanishing cycles in the factor $A_g  t_{a_1} t_{a_3}$ (or  in $t_{b_1} t_{b_3} B_g$) would alone normally generate  all of $\pi_1(\Sigma_g)$.  But observe that until we make substitutions along all  $t_{a_1} t_{a_3} t_{b_1} t_{b_3}$  factors,  the original collection of the vanishing cycles for the Lefschetz fibration on $\CP \# \, (4g+5) \CPb$  (obviously after a global conjugation) are still part of the monodromy,  which implies that $Z_{a,b}$ we get  for $b <2a$ is  simply-connected.  For the remaining $b=2a$ case,  one can similarly check  that there are still an abundant collection of vanishing cycles in $Z_{a, 2a}$ that normally generate the fundamental group of the fiber.  (In fact,  it suffices to observe that if we remove $a_1$ and $a_3$ from the collection,  but add $c_3$, which is one of the curves we get after the lantern substitution,  the vanishing cycles in hand still kill the entire fundamental group of the fiber.)

This concludes the proof of Theorem~\ref{thm:gplane}.

\begin{remark} \label{FSexamples}
There is at least one more example we can quickly cook up in the region $\mathcal{R}$ below the Noether line. Consider the genus--$3$ Lefschetz fibration on $\CP \# 17 \CPb$ with monodromy factorization: 
\[ (t_1t_2t _3 t_4 t_5 t_5 t_6 t_7^2 t_6 t_5 t_4 t_3 t_2 t_1)^2 =1 \, \text{ in } \M(\Sigma_3) \, , \]
 where $t_i=t_{c_i}$ and $c_i$ are as shown in Figure~\ref{fig:genus=g}.  This factorization can be re-written as 
\[ (t_1 t_3 t_5 t_7)^4 D = \, \text{ in } \M(\Sigma_3) \,  , \]
where $D$ is a product of positive Dehn twists.  Applying one lantern substitution along the factor $t_1 t_3 t_5 t_7$ results in a genus--$3$ Lefschetz fibration, the total space of which can be shown to be $\CP \# 16 \CPb$.  If we now take the fiber sum of these two fibrations, we get a genus--$3$ Lefschetz fibration whose total space is a minimal \cite{usherminimal,  baykurfibersum} symplectic $4$--manifold with $\chi_h=4$ and $c_1^2=1$, violating the Noether inequality. 

The only other examples of relatively minimal Lefschetz fibrations in the literature we know of which realize some lattice points in the region below the Noether line (and above $c_1^2=0$)  are the ones described by Fintushel and Stern in  \cite{fintushelsternLFs}.  
By taking fiber sums of two different holomorphic genus--$g$ Lefschetz fibrations on rational surfaces,  they get examples  on the line $c_1^2=\chi_h-3$ for any $\chi_h > 3$ with  $\chi_h \not \equiv 0$ (mod $3$).  Provided one determines the topological invariants that are only implicitly expressed in \cite{fintushelsternLFs},  one can obtain  more examples following their construction scheme, but still with several limitations in the way of populating a large region; for example, one needs to have $g=\frac{1}{2}(p-1)(q-2)=\frac{1}{2}(p'-1)(q'-2)$ for pairs of relatively prime positive integers $p, q$ and $p',q'$ with $\{p,q\} \neq \{p',q'\}$. In particular, for every prime $g$, there is only one lattice point with $\chi_h=g+1$ realized by this construction. 

Moreover, a common aspect of all these sporadic examples is that they are fiber sums.  In contrast, we suspect that most (perhaps all) of our examples $(Z_{a,b}, f_{a,b})$ with $a > 4$ and $b>0$ are fiber sum indecomposable, and not diffeomorphic to any of these other examples. 
\end{remark}

\begin{remark}\label{nonspinexs}
Recall that $Z_{g+1}$ is a homotopy $E(g+1)$ for all $a\ge3$; in particular, $Z_{g+1}$ is spin when $g+1$ is even. However, we can populate all lattice points on the line $c_1^2=0$ (with $\chi_h\ge3$) with \textit{non-spin} minimal simply connected symplectic genus--$g$ Lefschetz fibrations as well. This can be achieved by using a different twisted fiber sum than the one used to form $Z_{g+1}$. Let $(Y_{g+1},  h_{g+1})$ denote the Lefschetz fibration with monodromy factorization 
\[A_{g} \cdot t_{a_1}^{2g+2}\cdot t_{a_3}^{2g+2} \cdot t_{1}^{2g+2}\cdot t_{3}^{2g+2}\cdot D_{g} =1  \text{ in } \M(\Sigma_g) \, ,  \]
where  $A_{g} \cdot t_{a_1}^{2g+2}\cdot t_{a_3}^{2g+2}=1$ and $t_{1}^{2g+2}\cdot t_{3}^{2g+2}\cdot D_{g} =1$ are the monodromy factorizations described previously. Rewriting this monodromy as $(t_{a_1} t_{a_3} t_3) \cdot D =1$,  where $D$ is a product of positive Dehn twists,  it is clear that there is a fiber containing a sphere with self-intersection $-3$, implying that the intersection form of $Y_{g+1}$ is odd. 

One may wonder if we can as well generate \emph{spin} symplectic Lefschetz fibrations in the region $\mathcal{R}$, and this too  is possible by varying our construction and using the \emph{spin substitution} techniques developed in \cite{baykurhamada}.  In this case,  we would instead  start with a twisted fiber sum $(Y'_{g+1}, f'_{g+1})$ on the $c_1^2=0$ line with a monodromy factorization  
\[ t_{a_5}^{4g+4}\cdot t_{b_5}^{4g+4} \cdot D'_{g} =1  \text{ in } \M(\Sigma_g) \, ,  \]
for $g \geq 3$ and odd,  where $\{a_5,  b_5\}$ is the next  pair of curves one can imagine in Figure~\ref{fig:twistedfibersum},  which cobound a subsurface $\Sigma_2^2$ of $\Sigma_g$.  One can then perform $5$-chain substitions along each $t_{a_5} t_{b_5}$ factor to create the desired examples; cf.  the proof of Theorem~A in \cite{baykurhamada}.
\end{remark}

\begin{remark} \label{superconformal}
Let $X$ be a simply-connected,  almost complex $4$--manifold of Seiberg-Witten simple type.   Fintushel and Stern conjectured that if the characteristic numbers of $X$ satisfy $0 \leq c_1^2 \leq \chi_h -3 $ then $X$ has at least $\chi_h - c_1^2-2$ Seiberg-Witten basic classes.  This conjecture was first confirmed for $4$--manifolds of \textit{superconformal simple type} in a physics paper \cite{superconformal}, and was later completely proved by Feehan and Leness in  \cite{feehanleness}.  Note that any symplectic $X$ is of Seiberg-Witten simple type by work of Taubes.  Moreover,  in \cite{feehanlenessearlier} Feehan and Leness, building on the examples in \cite{fintushelparkstern} with $c_1^2 = \chi_h -3$,  observed that there are \textit{non-minimal} symplectic $4$--manifolds at all the lattice points with $0  \leq c_1^2 <\chi_h -3$ for which the bound on the number of basic classes is sharp.  While it is beyond the scope of our work here,  it would be interesting to determine if the same can be true for minimal examples,  and in particular seeing if the number of basic classes of $Z_{a,b}$ with $0 \leq b  < a -3$ is $a-b-2$.
\end{remark}

\begin{remark}\label{LFgeography}
Although there is  an extensive literature on the geography of (semi-stable) holomorphic fibrations on compact complex surfaces,  the situation in the symplectic case has not been understood well, at least until very recently; see e.g.  \cite{stipsiczgeography, mondengeography}. The recent works of \cite{monden,  cengelkorkmaz} showed the existence of simply-connected Lefschetz fibrations violating  Xiao's famous slope inequality, which is always satisfied by holomorphic fibrations.  Most recently,  it was established in \cite{baykurhamada} that there are simply-connected Lefschetz fibrations with positive signatures---whereas it is still not known if such examples exist in the holomorphic case.  Our examples in this article demonstrate further contrast between the geography of complex and symplectic Lefschetz fibrations. 
\end{remark}

\bigskip
\section{A Minimal Symplectic Exotic $\CP\#5\CPb$}\label{sec:exotic}

Consider the Lefschetz fibration with monodromy factorization 
\[(t_1^{14}t_at_b t_4 t_6) \cdot   (t_1 t_3 t_5t_7)^{ t_2^{-1} t_5 t_6^{-1}}\cdot D_6=1 \text{ in } \M(\Sigma_3)\]
given by Lemma \ref{lem:genus=3}. This fibration includes two singular fibers given by the clusters \,$t_1^{14}t_at_b t_4 t_6$\, and \,$(t_1 t_3 t_5t_7)^{ t_2^{-1} t_5 t_6^{-1}}$,  along with the two $(-1)$-sections corresponding to the boundary components $\delta_1$ and $\delta_2$ in Figure \ref{fig:genus=3}. The first singular fiber $F_1$ is comprised of a string of 15 transversely intersecting embedded $(-2)$-spheres and an immersed $(-2)$-sphere that transversely intersects the first and last $(-2)$-spheres of the string. The second singular fiber $F_2$ is comprised of two $(-4)$-spheres intersecting each other transversely four times. The $(-1)$-sections intersect $F_1$ in the first and fifteenth $(-2)$-spheres and intersect $F_2$ in each of the $(-4)$-spheres. This configuration is depicted schematically in Figure \ref{fig:fibration}.

Starting with this configuration of spheres, we will perform 16 blowups and a single blowdown to locate a plumbing $P$ symplectically embedded $\CP\# 32\CPb$ that can be symplectically rationally blown down to a symplectic exotic $\CP\# 5\CPb$. To prove that the blown down manifold $X$ is homeomorphic to $\CP\# 5\CPb$, we will apply Freedman's Theorem. To prove $X$ is not diffeomorphic to $\CP\# 5\CPb$, we will consider its symplectic Kodaira dimension. To this end, we first need to understand the homology classes of the fibers and sections of our Lefschetz fibration on $\CP\# 17\CPb$.

\begin{figure}[h!] 
	\centering
	\begin{subfigure}[b]{.53\textwidth}
		\centering
		\includegraphics[scale=.55]{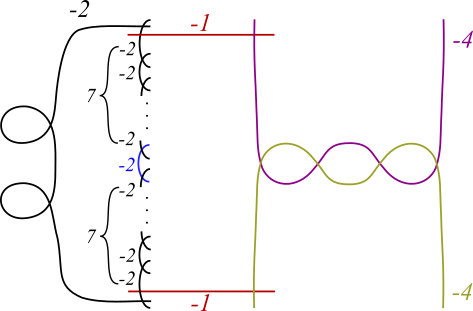}
		\caption{Configuration of spheres in $\CP\# 17\CPb$}\label{fig:fibration}
	\end{subfigure}
	\begin{subfigure}[b]{.46\textwidth}
		\centering
		\includegraphics[scale=.55]{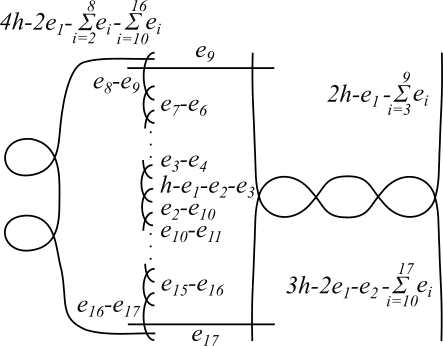}
		\caption{Homology classes}\label{fig:LFhomclasses2}
	\end{subfigure}
	\caption{}
\end{figure}

\begin{prop} Let $\{h,e_1,\ldots,e_{17}\}$ denote the standard basis of $H_2(\CP\# 17\CPb)$. The homology classes of the configuration of spheres in Figure \ref{fig:fibration} are the homology classes shown in Figure \ref{fig:LFhomclasses2}.\label{fig:LFhomclassesprop}\end{prop}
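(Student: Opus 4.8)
The plan is to track the homology classes of the fibers and sections through the explicit sequence of $16$ blow-ups and one blow-down that produces the configuration, but since the proposition as stated concerns only the configuration already sitting inside $\CP\#17\CPb$, I would argue directly in $\CP\#17\CPb$. First I would fix the identification of $H_2(\CP\#17\CPb)$ with the standard basis $\{h,e_1,\ldots,e_{17}\}$ so that the hyperelliptic genus--$3$ Lefschetz fibration on $\CP\#17\CPb$ (equivalently $\CP\#(4g+5)\CPb$ with $g=3$) has regular fiber class $\Sigma$ of square $0$ and self-intersection $-1$ sections. The building blocks are: (i) the class of the generic fiber $\Sigma$, which can be read off from the fact that an untwisted double fiber sum gives $E(4)$ and hence $\Sigma^2=0$, $K\cdot\Sigma=2\cdot 3-2=4$; and (ii) the two $(-1)$-sections coming from $\delta_1,\delta_2$. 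Then each of the $18$ components of $F_1$ (fifteen $(-2)$-spheres in a chain plus one immersed $(-2)$-sphere) and the two $(-4)$-spheres of $F_2$ has its class pinned down by three constraints: the sum of the components (with the appropriate multiplicities, all one here) must equal $\Sigma$; consecutive spheres in the $F_1$-chain meet once while non-consecutive ones are disjoint, and the immersed sphere meets exactly the first and fifteenth; and the intersection numbers with the two $(-1)$-sections are the prescribed ones ($0$ or $1$). Finally the square of each sphere is prescribed ($-2$ or $-4$).

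The key steps, in order, would be: (1) write $\Sigma$ in the basis — here one uses that the hyperelliptic fibration on $\CP\#17\CPb$ is obtained from the standard genus--$3$ pencil, so $\Sigma = $ an explicit primitive class (the image of a high-degree curve), and the two sections are exceptional classes among the $e_i$; (2) solve the linear system for the chain of fifteen $(-2)$-spheres: adjacent classes $S_i, S_{i+1}$ satisfy $S_i\cdot S_{i+1}=1$, $S_i^2=-2$, $\sum S_i + (\text{immersed sphere}) = \Sigma$, which recursively determines each $S_i$ up to the two boundary conditions given by the sections hitting $S_1$ and $S_{15}$; (3) determine the immersed $(-2)$-sphere's class from $\Sigma - \sum S_i$ and check its square and intersections are consistent with it meeting $S_1$ and $S_{15}$ once each; (4) determine the two $(-4)$-spheres $T_1,T_2$ of $F_2$ from $T_1+T_2=\Sigma$, $T_i^2=-4$, $T_1\cdot T_2=4$, and $T_i\cdot(\text{section})=1$; (5) cross-check everything against the adjunction equality $K\cdot C = -\chi(C) - C^2$ for each embedded sphere $C$ (giving $K\cdot S_i = 0$, $K\cdot T_i = 2$) and against $K\cdot\Sigma=4$, i.e. $\sum K\cdot(\text{components}) = 4$, which serves as a consistency check on the whole assignment. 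The output is exactly the list of classes displayed in Figure~\ref{fig:LFhomclasses2}, so the proof is really the bookkeeping that realizes that figure.

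The main obstacle I expect is step (1)–(2): fixing the \emph{correct} primitive representative of the fiber class and the \emph{correct} exceptional classes for the two sections inside $\CP\#17\CPb$, because the fibration here is not literally the standard pencil but the one coming from the positive factorization of Lemma~\ref{lem:genus=3} after capping off the two boundary twists, and one must be careful that the blow-up structure of $\CP\#(4g+5)\CPb = \CP\#17\CPb$ matches the node count ($14 + 2 + 2 = 18$ nodes from $t_1^{14}t_at_bt_4t_6$ and $t_1t_3t_5t_7$ conjugated, plus the six twists in $D_6$, total $24$... here one tracks that the hyperelliptic fibration has $8g+4 = 28$ critical points and $e(\CP\#17\CPb) = 3 + 17 = 20 = 4\cdot(2-3) + 28$, which fixes the count). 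Once the fiber and section classes are correctly normalized, the rest is a determined linear-algebra computation over $\Z$ whose solution is forced; I would present it as: normalize $\Sigma$ and the sections, then observe that the chain relation together with the section-intersection data admits a unique solution, and finally verify adjunction and $\Sigma^2 = 0$ hold for the resulting classes, matching Figure~\ref{fig:LFhomclasses2}.
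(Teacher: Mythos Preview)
Your approach---working directly in the rank-$18$ lattice $H_2(\CP\#17\CPb)$ and solving for the sphere classes from their intersection data---is genuinely different from the paper's, and as you yourself flag, the gap is step~(1): you never explain how to determine the fiber class $\Sigma$ and the two section classes in the standard basis $\{h,e_1,\ldots,e_{17}\}$. Saying the fibration ``is obtained from the standard genus--$3$ pencil'' and that the sections ``are exceptional classes among the $e_i$'' is exactly what needs to be proved, not assumed. Even granting $\Sigma$ and the sections, your claim that the chain and section-intersection data force a \emph{unique} integral solution is asserted, not argued; embeddings of an $A_{15}$-type chain into $\langle 1\rangle\oplus 17\langle -1\rangle$ are far from unique, and you would have to show that the extra constraints (components summing to $\Sigma$, orthogonality to the $F_2$ components, section incidences, adjunction) really pin things down over~$\Z$.

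The paper sidesteps both problems by a geometric reduction: it blows down the two $(-1)$--sections, then iteratively blows down the resulting $(-1)$--spheres in the $F_1$ chain (seven from each end), reaching a minimal ruled surface after $16$ blowdowns. A short adjunction argument rules out $\CP\#\CPb$, so the target is $S^2\times S^2$; in its rank-$2$ lattice, adjunction and the surviving intersection numbers force the four remaining curve classes uniquely. Reversing the blowdowns then reconstructs every original class explicitly, and a final change of basis $f\mapsto h-e_1$, $s\mapsto h-e_2$, $d_1\mapsto h-e_1-e_2$, $d_i\mapsto e_{i+1}$ yields the figure. The key content is precisely this identification of the minimal model and the classes there; that is the missing ingredient in your outline, and it is what makes the ``bookkeeping'' you describe actually computable rather than an underdetermined lattice problem.
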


\begin{proof}
	
	To determine these homology classes, we first symplectically blow down the configuration of spheres in $\CP\# 17\CPb$ 16 times. Upon doing so, the ambient 4-manifold is either $\CP\# \CPb$ or $S^2\times S^2$. We first show that it is the latter. Starting with the configuration shown in the first diagram of Figure \ref{fig:blowdowns}, blow down the two $-1$-sections to obtain the next diagram in the figure. Starting with either green $-1$-sphere, sequentially blow down seven times. Then repeat starting with the other green $-1$-sphere. The result will be the configuration of spheres $A$, $B$, $C$, and $D$ depicted in the last diagram. With abuse of notation, let $A$, $B$, $C$, and $D$ denote both the spheres and their homology classes. It is clear that $A^2=12$, $B^2=0$, $C^2=D^2=4$, $B\cdot C=B\cdot D=1,$ and $C\cdot D=4$. Note that when we blew down the green $-1$-spheres in the second diagram, we introduced triple points. We then sequentially blow down at these triple points six times each. Thus we have that $A\cdot C=A\cdot D=7$.
	
	\begin{figure}
		\centering
		\includegraphics[scale=.56]{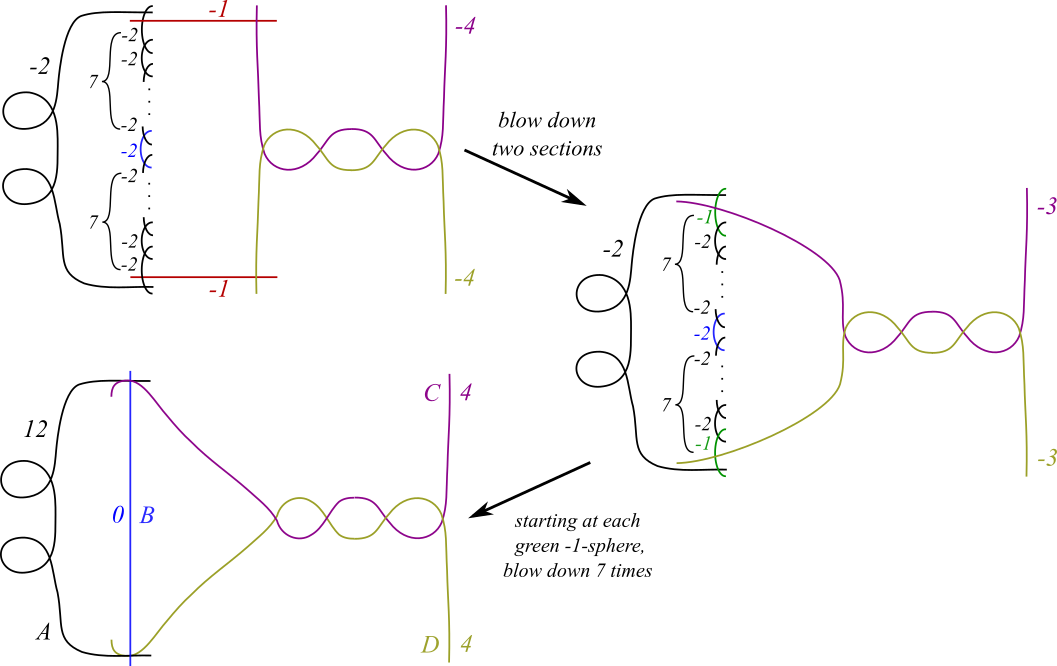}
		\caption{Blowing down to $S^2\times S^2$}\label{fig:blowdowns}
	\end{figure}

	Suppose this configuration lives in $\CP\# \CPb$ and let $\{h,e\}$ be the standard basis for $H_2(\CP\# \CPb)$. Then its canonical class is $K=\text{PD}(-3h+e)$. Let $B=xh+ye$. Then $B^2=x^2-y^2=0$ and $\langle -K, B\rangle=3x+y$. Moreover, by the adjunction equality, $\langle -K, B\rangle = B^2+2-2g(B)=2$. Combining these equations, we find that $x=1, y=-1$ and so $B=h-e$. A similar computation shows that $C=2h$. This implies that $C\cdot B=2$, which is not the case. Thus the configuration of spheres must be in $S^2\times S^2$.
	
	Let $s$ and $f$ denote the standard section and fiber generators of $H_2(S^2\times S^2)$. The canonical class is $K=\text{PD}(-2s-2f)$. Let $A=xs+yf$. Then $A^2=2xy=12$ and $\langle -K,A\rangle = 2x+2y$. Moreover, since the homology class of $A$ can be represented by a genus 2 surface, the adjunction equality gives us $\langle -K,A\rangle= 10$. Combining these equations, we easily see that $(x,y)=(2,3)\text{ or }(3,2)$. We may assume the former so that $A=2s+3f$. Now let $B=xs+yf$. A similar argument shows that $(x,y)=(0,1)\text{ or }(1,0)$. Since $A\cdot B=2$, we must have the former and so $B=f$. Continuing in this way, we see that $C=D=s+2f$.
	
	Now that we know the homology classes of $A$, $B$, $C$, and $D$, we will reverse the blowdown process via blowups to recover the original configuration of spheres in $\CP\# 17\CPb$ along with their homology classes. This blowup process is shown in Figure \ref{fig:LFhomclasses}. The first diagram shows the configuration of spheres $A$, $B$, $C$, $D$ in $S^2\times S^2$. Consider the intersection between $A$, $B$, and $C$, which is marked by a red point. We blow up this point seven times and call the exception spheres $d_1,\ldots,d_7$. The resulting configuration is shown in the second diagram of Figure \ref{fig:LFhomclasses}. Next, blow up seven times in the same way starting at the red point in the second diagram to obtain the third diagram and call the new exceptional spheres $d_9,\ldots,d_{15}$. Finally, blow up at the final two red points to obtain the fourth diagram, which is the original configuration of spheres provided by the Lefschetz fibration. Call these last two exceptional spheres $d_8$ and $d_{16}$. The homology classes of the spheres in this final configuration is shown in the fourth diagram of Figure \ref{fig:LFhomclasses}. 
	
	Let $\{h,e_1,\ldots,e_{17}\}$ be the standard basis for $H_2(\CP\# 17\CPb)$. By performing the change of basis $f\mapsto h-e_1$, $s\mapsto h-e_2$, $d_1\mapsto h-e_1-e_2$, and $d_i\to e_{i+1}$ for all $2\le i\le 16$, we obtain the homology classes shown in Figure \ref{fig:LFhomclasses2}.
\end{proof}

\begin{figure}
	\centering
	\includegraphics[scale=.55]{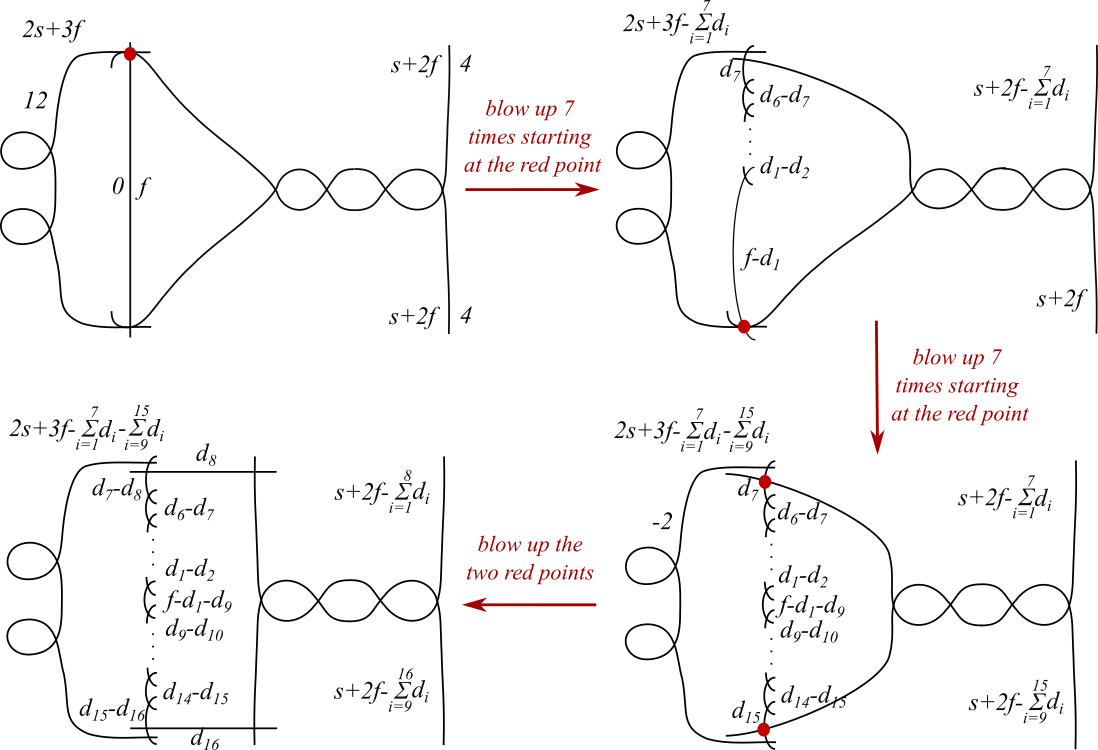}
	\caption{Blowing up $S^2\times S^2$ to recover the homology classes of the original Lefschetz fibration configuration}\label{fig:LFhomclasses}
\end{figure}

\begin{figure}
	\centering
	\includegraphics[scale=.53]{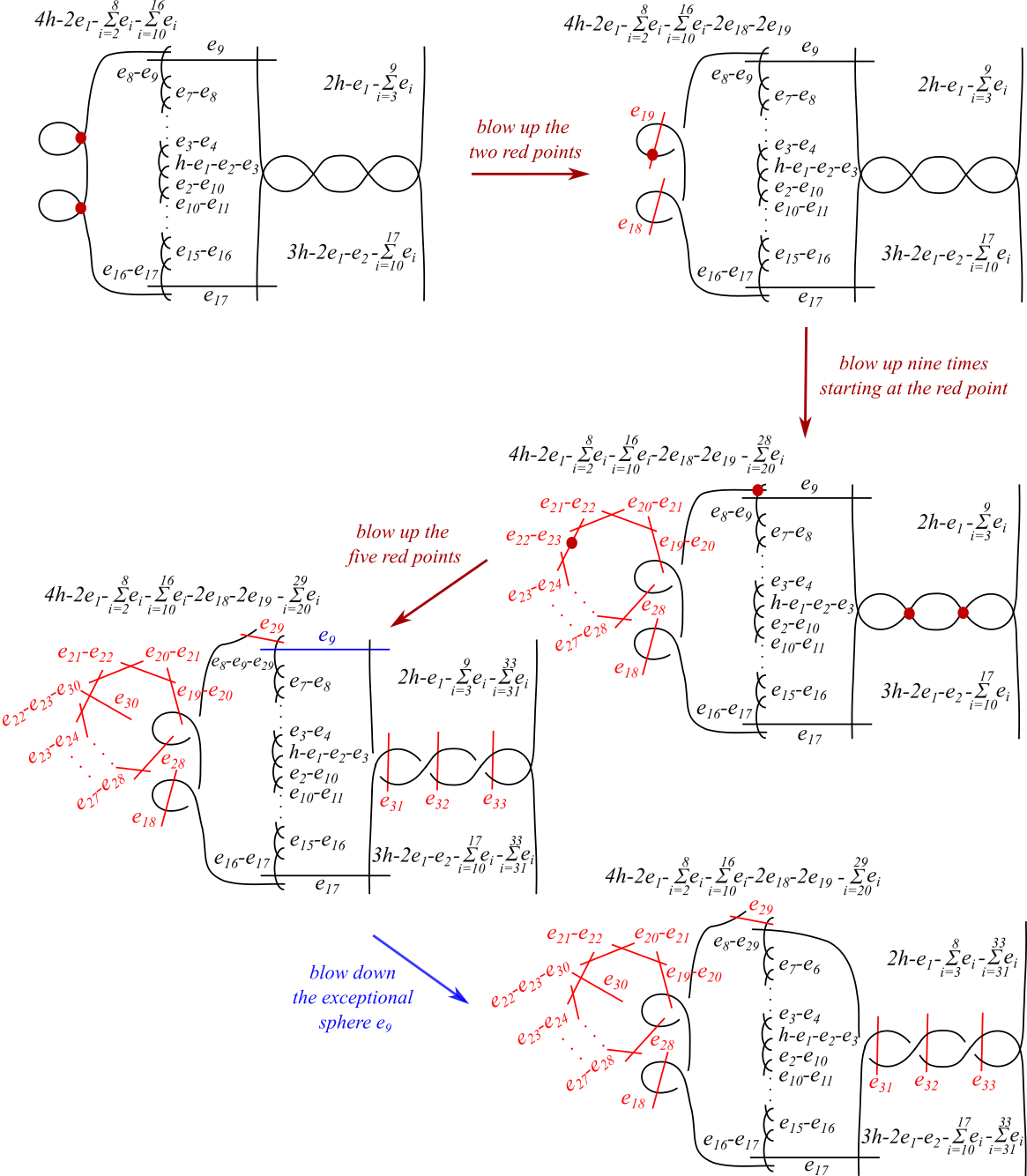}
	\caption{Sixteen blowups and one blowdown.}\label{fig:blowupfibration}
\end{figure}

\begin{figure}
	\centering
	\begin{subfigure}{\textwidth}
		\centering
		\includegraphics[scale=.5]{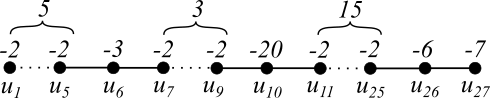}
		\bigskip
		\caption{The linear plumbing $P$}\label{fig:plumbing}
		\vspace{1cm}
	\end{subfigure}
	
	\begin{subfigure}{\textwidth}
		\centering
		\begin{tabular}{ c | c || c| c }
			Sphere & Homology Class & Sphere & Homology Class\\\hline
			$u_1$ & $e_{27}-e_{28}$ & $u_{15}$ & $e_{12}-e_{13}$\\\hline
			$u_2$ & $e_{26}-e_{27}$ & $u_{16}$ & $e_{11}-e_{12}$\\\hline
			$u_3$ & $e_{25}-e_{26}$ & $u_{17}$ & $e_{10}-e_{11}$\\\hline
			$u_4$ & $e_{24}-e_{25}$ & $u_{18}$ & $e_{2}-e_{10}$\\\hline
			$u_5$ & $e_{23}-e_{24}$ & $u_{19}$ & $h-e_1-e_2-e_3$\\\hline
			$u_6$ & $e_{22}-e_{23}-e_{30}$ & $u_{20}$ & $e_{3}-e_{4}$\\\hline
			$u_7$ & $e_{21}-e_{22}$ & $u_{21}$ & $e_{4}-e_{5}$\\\hline
			$u_8$ & $e_{20}-e_{21}$ & $u_{22}$ & $e_{5}-e_{6}$\\\hline
			$u_9$ & $e_{19}-e_{20}$ & $u_{23}$ & $e_{6}-e_{7}$\\\hline
			$u_{10}$ & $\displaystyle 4h-2e_1-\sum_{i=2}^8 e_i - \sum_{i=10}^{16}e_i-2e_{18}-2e_{19}-\sum_{i=20}^{29}e_i$ &$u_{24}$ & $e_{7}-e_{8}$ \\\hline
			$u_{11}$ & $e_{16}-e_{17}$ & $u_{25}$ & $e_{8}-e_{29}$\\\hline
			$u_{12}$ & $e_{15}-e_{16}$&  $u_{26}$ & $\displaystyle 2h-e_1-\sum_{i=3}^8 e_i - \sum_{i=31}^{33}e_i$\\\hline
			$u_{13}$ & $e_{14}-e_{15}$& $u_{27}$ & $\displaystyle 3h-2e_1-e_2-\sum_{i=10}^{17} e_i - \sum_{i=31}^{33}e_i$ \\\hline
			$u_{14}$ & $e_{13}-e_{14}$
		\end{tabular}
		\bigskip
		\caption{The homology classes of $u_1,\ldots,u_{27}$}\label{fig:homclasses}
	\end{subfigure}
	\medskip
	\caption{The plumbing $P$ and its homology classes}\label{fig:Pclasses}
\end{figure}

Let $P$ be the linear plumbing with weights $$(\underbrace{-2,\ldots,-2}_{5},-3,\underbrace{-2,\ldots,-2}_{3}, -20, \underbrace{-2,\ldots,-2}_{15},-6,-7).$$ 

By symplectically blowing up the Lefschetz fibration on $\CP\# 17\CPb$ sixteen times, and blowing down once, we can find $P$ symplectically embedded in $\CP\# 32\CPb$. This process is shown in Figure \ref{fig:blowupfibration}. Let $u_1,\ldots, u_{27}$ denote the homology classes of the spheres of $P$, as shown in Figure \ref{fig:plumbing}. These homology classes are found explicitly in Figure \ref{fig:blowupfibration}, but for simplicity, they are recorded in the table in Figure \ref{fig:homclasses}.

The boundary of $P$ is the lens space $L(585^2,291914)$, which bounds a rational homology 4-ball $B$, by Lisca \cite{liscalensspace}. Moreover, by Park \cite{parkbdown} and Symington \cite{symington2}, $P$ can by symplectically rationally blown down. Let $Z=\CP\# 32\CPb\setminus P$ and let $X=Z\cup B$ be the result of the symplectic rational blowdown.

\medskip
\begin{prop} $X$ is homeomorphic to $\CP\# 5\CPb$.\label{homeomorphic}\end{prop}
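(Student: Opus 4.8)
The plan is to verify that $X$ is simply-connected and then pin down its homeomorphism type via its Betti numbers, signature, and parity, invoking Freedman's classification of simply-connected closed $4$--manifolds.

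\textbf{Euler characteristic and signature.} First I would compute $b_2$, $\sigma$, and $\chi$ of $X$ by bookkeeping through the construction. We start from $\CP \# 17\CPb$, perform $16$ blow-ups and one blow-down, landing in $\CP \# 32\CPb$, which has $\chi = 35$ and $\sigma = -31$. The plumbing $P$ is linear with $27$ vertices, so $H_2(P)$ has rank $27$ and $\chi(P) = 28$; its boundary is the rational homology sphere $L(585^2, 291914)$, and it is replaced by a rational homology ball $B$ with $\chi(B) = 1$. Since $X = Z \cup B$ where $Z = \CP\#32\CPb \setminus P$, a Mayer--Vietoris / Novikov-additivity computation gives $\chi(X) = \chi(\CP\#32\CPb) - \chi(P) + \chi(B) = 35 - 28 + 1 = 8$ and $\sigma(X) = \sigma(\CP\#32\CPb) - \sigma(P)$. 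The intersection form of $P$ is negative definite, so $\sigma(P) = -27$, giving $\sigma(X) = -31 + 27 = -4$. Hence $b_2(X) = \chi(X) - 2 = 6$ with $b_2^+ = 1$, $b_2^- = 5$, matching $\CP\#5\CPb$. (I would double-check these numbers against the explicit homology classes in Figure~\ref{fig:homclasses}; in particular the weight sequence has exactly $5+1+3+1+15+1+1 = 27$ entries, confirming the vertex count.)

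\textbf{Simple connectivity.} This is the step I expect to require the most care. Rationally blowing down a plumbing can only add relations to $\pi_1$ coming from $\pi_1$ of the rational homology ball $B$ and kill nothing; the subtle point is that $\pi_1(B)$ for Lisca's ball bounding $L(585^2, 291914)$ is finite cyclic (in fact the lens-space rational-ball fillings have $\pi_1 \cong \Z/p$ where $p^2$ is the first homology order), so a priori $\pi_1(X)$ could be a nontrivial finite cyclic group. To rule this out I would argue as in the standard rational-blowdown fundamental group calculations (e.g. the scheme used throughout \cite{park, parkstipsiczszabo}): show that $\pi_1(Z) = 1$ using van Kampen on $\CP\#32\CPb = Z \cup P$ — since $\CP\#32\CPb$ is simply-connected and $P$ is simply-connected with connected boundary, $\pi_1(Z)$ is normally generated by the image of $\pi_1(\partial P)$, and one checks the generator of $\pi_1(\partial P) = \Z/585^2$ is already trivial in $\pi_1(Z)$ by exhibiting a sphere or disk in $Z$ bounding the relevant curve, for instance using a section sphere of the Lefschetz fibration (the $(-1)$-sections and their transforms) that meets $\partial P$ appropriately. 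Then van Kampen on $X = Z \cup_{\partial} B$ gives $\pi_1(X) = 1$ since both pieces and the gluing region contribute trivially. Concretely, I would trace a section of the fibration disjoint from $P$ — the homology-class table shows $P$ avoids certain exceptional classes, leaving room for such a sphere — and use it to show the boundary circle of $P$ dies in $Z$.

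\textbf{Parity and conclusion.} Since $\CP\#32\CPb$ has odd intersection form and $b_2^+(X) = 1 > 0$, $X$ is not spin — one can see this directly because some exceptional sphere class $e_i$ survives in $X$ (it is not used up in building $P$, as visible in the table of classes $u_1,\dots,u_{27}$), giving a class of odd square, so $X$ has odd intersection form. Therefore $X$ is a closed, simply-connected, smooth $4$--manifold with $b_2^+ = 1$, $b_2^- = 5$, and odd intersection form, so its intersection form is $\langle 1 \rangle \oplus 5\langle -1 \rangle$. By Freedman's theorem \cite{freedman}, $X$ is homeomorphic to $\CP\#5\CPb$. The main obstacle is the fundamental group computation: one must carefully verify that the loop generating $H_1(\partial P)$ bounds in the complement $Z$, which amounts to identifying a suitable sphere (a fiber section or a low-genus surface component) in $Z$ and checking its intersection with the boundary lens space — a routine but bookkeeping-heavy task given the explicit picture in Figure~\ref{fig:blowupfibration}.
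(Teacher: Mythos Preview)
Your outline is correct and matches the paper's strategy: compute $\chi$ and $\sigma$ by additivity, show $\pi_1(X)=1$ via van Kampen by proving $\pi_1(Z)=1$, check the form is odd, and apply Freedman. The only place where your sketch is genuinely thinner than the paper is the simple-connectivity step. You propose to find a section-type sphere in $Z$ that bounds the generator of $\pi_1(\partial P)$; the paper instead argues that since $\pi_1(\partial P)$ is abelian and surjects onto $\pi_1(Z)$, it suffices to kill $H_1(Z)$, and then uses \emph{two} exceptional spheres left over in $Z$ (namely $e_{30}$, which meets only $u_6$, and an exceptional sphere meeting $u_1$ and $u_{10}$) to extract the relations $6\overline{\mu_1}=0$ and $35\overline{\mu_1}=0$ on the generator $\overline{\mu_1}$, whence $\overline{\mu_1}=0$ by coprimality. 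A single bounding disk as you suggest would in fact be hard to locate here --- no exceptional class hits $P$ exactly once in a sphere of weight $-2$ adjacent to the end --- so the two-relation gcd trick is the concrete mechanism you would need. For parity, the paper simply notes $\sigma=-4$ is not divisible by $16$ rather than tracking a surviving odd-square class; either works.
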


\begin{proof} We first show that $X$ is simply connected. By the Seifert Van-Kampen Theorem, $\pi_1(X)=\pi_1(Z)\ast_{\pi_1(\partial P)}\pi_1(B)$. Since $\CP\# 32\CPb$ is simply connected, the map $\pi_1(\partial P)\to \pi_1(Z)$ induced by inclusion is surjective. Moreover, the map $\pi_1(\partial P)\cong\Z_{585^2}\to \pi_1(B)\cong \Z_{585}$ induced by inclusion is surjective (\cite{parkbdown}). Thus, it suffices to show that $Z$ is simply connected. Now since $\pi_1(\partial P)$ is abelian, so is $\pi_1(Z)$; consequently, $\pi_1(Z)\cong H_1(Z)$ and so it suffices to show that $H_1(Z)$ is trivial.
	
	For $1\le i\le 27$, let $\mu_i\in H_1(\partial P)$ denote the homology class of the meridian of the $i$th surgery curve in the obvious surgery diagram for $\partial P$. Let $\overline{\mu_i}$ denote the image of $\mu_i$ in $H_1(Z)$. Then $H_1(\partial P)\cong \Z_{585^2}$ is generated by $\mu_1$ and the elements $\mu_1,\ldots, \mu_{10}$ satisfy the relations
	$$ 2\mu_1 = \mu_2,\quad 2\mu_i=\mu_{i-1}+\mu_{i+1}\text{ for $2\le i\le 5$ and $6\le i\le 9$}, \text{ and} \quad 3\mu_6=\mu_5+\mu_7.$$
	Combining these relations, we have that $\mu_6=6\mu_1$ and $\mu_{10}=34\mu_1$. Since $H_1(\partial P)$ surjects onto $H_1(Z)$, $H_1(Z)$ is generated by $\overline{\mu_1}$ and satisfies the relations $\overline{\mu_6}=6\overline{\mu_1}$ $\overline{\mu_{10}}=34\overline{\mu_1}$.
	
	In $Z$, $\overline{\mu_6}$ can be represented by the equator of the exceptional sphere $e_{30}$; thus $\overline{\mu_6}=0$ and so $6\overline{\mu_1}=\overline{\mu_6}=0$. Similarly, $\overline{\mu_1}$ and $\overline{\mu_{10}}$ can be represented by circles on the exceptional sphere $e_{29}$ which cobound an annulus; hence $\overline{\mu_1}=-\overline{\mu_{10}}$ and so $\overline{\mu_1}=\overline{\mu_{10}}=34\overline{\mu_1}$, or $35\overline{\mu_1}=0$. Since 6 and 35 are relatively prime, we readily obtain $\overline{\mu_1}=0$, proving that $H_1(Z)$ is trivial. Thus $X$ is simply connected.
	
	Now, $\sigma(X)=\sigma(\CP\# 32\CPb)-\sigma(P)+\sigma(B)=-4=\sigma(\CP\# 5\CPb)$ and $\chi(X)=\chi(\CP\# 32\CPb)-\chi(P)+\chi(B)=\chi(\CP\# 5\CPb)$. Since the signatures of $X$ and $\CP\# 5\CPb$ are not divisible by 16, both manifolds are odd. Thus, by Freedman's theorem \cite{freedman}, $X$ is homeomorphic to $\CP\# 5\CPb$.
\end{proof}

\begin{prop} $X$ is not diffeomorphic to $\CP\# 5\CPb$.\label{diffeomorphic}\end{prop}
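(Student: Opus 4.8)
The plan is to show that $X$ has symplectic Kodaira dimension $\kappa^s(X) \neq -\infty$, since $\CP\# 5\CPb$ is rational and hence has $\kappa^s = -\infty$; this immediately distinguishes the two. First I would observe that $X = Z \cup B$ is a closed symplectic $4$-manifold obtained by a symplectic rational blowdown (by Symington \cite{symington2}), so it carries a symplectic structure whose canonical class $K_X$ is determined by the canonical class of $\CP\# 32\CPb$ together with the plumbing data. Concretely, writing $K_{\CP\# 32\CPb} = \mathrm{PD}(-3h + \sum_{i=1}^{32} e_i)$ in the standard basis, I would use the homology classes of the spheres $u_1,\ldots,u_{27}$ tabulated in Figure~\ref{fig:homclasses} and the adjunction/blowdown formulas to compute $K_X^2 = K_{\CP\# 32\CPb}^2 - K_P^2$ (where the rational ball contributes nothing to $b_2$), and verify $K_X^2 \geq 0$, or more robustly, exhibit a symplectic surface in $X$ witnessing $\kappa^s \geq 0$.

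The cleaner route, which I would pursue in parallel, is to track a symplectic surface of non-negative square surviving the rational blowdown. The genus-$3$ Lefschetz fibration on $\CP\# 17\CPb$ has a regular fiber $\Sigma$ of genus $3$ and square $0$; after the $16$ blowups and one blowdown producing $\CP\# 32\CPb$, and after removing the plumbing $P$, I would check that (a suitable proper transform of) a fiber component or a section-plus-fiber configuration lies in the complement $Z$ and descends to a symplectic surface $\widehat{\Sigma} \subset X$ of non-negative self-intersection and genus $\geq 1$. Since a rational or ruled symplectic $4$-manifold contains no symplectic surface of genus $\geq 1$ with non-negative square that is not a fiber-type sphere/torus configuration forcing $b_2$ too large, this contradicts $X \cong \CP\# 5\CPb$. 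Equivalently, by Liu's and Li's classification of symplectic $4$-manifolds with $\kappa^s = -\infty$, such a surface forces $\kappa^s(X) \geq 0$, whence $X$ is not rational or ruled, while $\CP\# 5\CPb$ is rational.

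Alternatively — and this may be the most economical phrasing — I would invoke minimality. The surface $\widehat{\Sigma}$ of genus $\geq 2$ with $\widehat{\Sigma}^2 \geq 0$ (coming from the Lefschetz fiber, which has genus $3$ and whose relevant components persist through the blowdown since we only blow up/down at points disjoint from, or controlled intersections with, the chosen representative) immediately gives $K_X \cdot \widehat{\Sigma} > 0$ by adjunction, so $K_X \neq 0$ and $K_X$ is not torsion; combined with $K_X^2 \geq 0$ computed from the homology data, this puts $X$ outside the Kodaira dimension $-\infty$ range. Then by Li \cite{usherminimal}-type results (or Taubes), $X$ has nontrivial Seiberg--Witten invariants with a basic class other than $\pm K$ being impossible to match against $\CP\# 5\CPb$, whose $\SW$ invariants all vanish. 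Since $X$ is simply connected with $b_2^+ = 1$ this last comparison needs the small-perturbation $\SW$ invariant, but for $\kappa^s \geq 0$ symplectic manifolds with $b_2^+ = 1$ the canonical class is still a distinguished basic class while rational surfaces have none, so the distinction stands.

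\textbf{Main obstacle.} The crux is verifying that the genus-$3$ Lefschetz fiber (or a definite piece of it) genuinely survives into the complement $Z = \CP\# 32\CPb \setminus P$ as a symplectic surface of controlled genus and non-negative square, i.e.\ bookkeeping the sixteen blowups and single blowdown against the plumbing spheres $u_1,\ldots,u_{27}$ to certify that some fiber component is disjoint from $P$ (or meets it in a way that, after rational blowdown, yields the desired surface in $X$). This is a concrete but delicate homology computation using Figures~\ref{fig:blowupfibration} and~\ref{fig:homclasses}: one must identify a class $\widehat{\Sigma} \in H_2(X)$, represented symplectically, with $\widehat{\Sigma}^2 \geq 0$ and $g(\widehat{\Sigma}) \geq 1$, and then read off $\kappa^s(X) \geq 0$. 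Everything else — Freedman was already done in Proposition~\ref{homeomorphic}, and the rationality of $\CP\# 5\CPb$ is classical — is routine once this surface is in hand.
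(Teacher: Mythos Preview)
Your high-level strategy --- show $K_X \cdot \omega_X > 0$ (equivalently $\kappa^s(X) \geq 0$) and contrast with $\CP\#5\CPb$ being rational --- is exactly the paper's. But your proposed implementation has a real gap and one wrong step, and the paper carries it out quite differently.

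The wrong step: it is \emph{not} true that rational surfaces contain no symplectic surface of genus $\geq 1$ with non-negative square; a smooth degree-$d$ curve in $\CP$ has genus $\binom{d-1}{2}$ and square $d^2>0$, and survives any blowup away from it. So exhibiting such a $\widehat{\Sigma}$ alone does not rule out rationality. What you actually need (and half-say) is $K_X \cdot \omega_X > 0$. Since $b_2^+(X)=1$ and $K_X^2 = 2\chi + 3\sigma = 4 > 0$, a symplectic $\widehat{\Sigma}$ with $\widehat{\Sigma}^2 \geq 0$ and $K_X \cdot \widehat{\Sigma} > 0$ \emph{would} place $K_X$ and $\omega_X$ in the same component of the positive cone, forcing $K_X \cdot \omega_X > 0$ --- but you have not produced $\widehat{\Sigma}$. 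A regular genus-$3$ fiber meets both $(-1)$-sections, and those sections are absorbed into the construction of $P$ (one is blown down, the other enters the chain), so the fiber does not lie in $Z$. You correctly flag this as your main obstacle, and nothing in the proposal overcomes it. Your Seiberg--Witten remarks are also shaky: with $b_2^+=1$ the invariants are chamber-dependent and rational surfaces do have nonvanishing small-perturbation invariants in suitable chambers, so ``rational surfaces have none'' is not a usable dichotomy here.

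The paper sidesteps the surface hunt entirely and computes $K_X \cdot \omega_X$ directly. One writes a general compatible symplectic form on $\CP\#32\CPb$ as $\omega = \mathrm{PD}\bigl(ah - \sum_{i\neq 9} b_i e_i\bigr)$ subject to the standard symplectic-cone constraints (in particular $a > \sum_{i\neq 9} b_i$), and uses the decomposition
\[
K_X \cdot \omega_X \;=\; K|_Z \cdot \omega|_Z \;=\; K \cdot \omega \;-\; (K|_P)\cdot(\omega|_P).
\]
The last term is evaluated via the inverse of the $27\times 27$ intersection matrix of $P$ against the explicit classes $u_1,\ldots,u_{27}$ from Figure~\ref{fig:homclasses}; after the algebra one gets
\[
K_X \cdot \omega_X \;>\; \tfrac{1}{585}\Bigl(a - \textstyle\sum_{i\neq 9} b_i\Bigr) \;>\; 0.
\]
Li--Liu's uniqueness of the symplectic structure on $\CP\#5\CPb$ (for which $K\cdot\omega<0$) then finishes the argument. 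This route uses exactly the homology data already tabulated and requires no surface in $Z$ to be located; it is what you should do instead.
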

\begin{proof}
	Let $K_{\CP\# 5\CPb}$ denote the canonical class of $\CP\# 5\CPb$. By Theorem D in \cite{liliu95}, $\CP\# 5\CPb$ has a unique symplectic structure up to diffeomorphism and deformation. Consequently, since the standard symplectic structure $\nu$ on $\CP\# 5\CPb$ satisfies $(K_{\CP\# 5\CPb})\cdot\nu<0$, $\CP\# 5\CPb$ does not admit a symplectic structure $\nu$ satisfying $(K_{\CP\# 5\CPb})\cdot\nu>0$. Let $K_X$ denote the canonical class of $X$ and let $\omega_X$ denote the symplectic class of $X$. We will show that $K_X\cdot \omega_X>0$, proving that $X$ is not diffeomorphic to $\CP\# 5\CPb$.
	
	The canonical class of $\CP\# 32\CPb$ is given by $K=\text{PD}(-3h+\displaystyle\sum^{33}_{\begin{subarray}{l} i=1\\i\neq 9\end{subarray}}e_i)$. It is well-known that $\CP\# 32\CPb$ admits a symplectic structure compatible with $K$ and whose cohomology class can be represented by $\displaystyle\omega=PD(ah-\sum_{\begin{subarray}{l} i=1\\i\neq 9\end{subarray}}^{33}b_ie_i)$, where $a>b_0>\cdots>b_{33}$ and $a>\displaystyle\sum^{33}_{\begin{subarray}{l} i=1\\i\neq 9\end{subarray}}b_i$ (see, for example, Lemma 5.4 in \cite{karakurtstarkston}).  Note that $K\cdot\omega= -3a+\displaystyle\sum^{32}_{\begin{subarray}{l} i=1\\i\neq 9\end{subarray}}b_i$.
	
	Give $H_2(P)$ the basis $\{u_1,\ldots,u_{27}\}$ and give $H^2(P)$ the hom-dual basis $\{\gamma_1,\ldots,\gamma_{27}\}$. Then
	\noindent $K|_P=\displaystyle\sum_{i=1}^{27}(K\cdot u_i)\gamma_i=\gamma_6+18\gamma_{10}+4\gamma_{26}+5\gamma_{27}$ and 
	
	\begin{flalign*}
	\omega|_P=\displaystyle\sum_{i=1}^{27}(\omega\cdot u_i)\gamma_i = &(b_{27}-b_{28})\gamma_1 + (b_{26}-b_{27})\gamma_2  + \cdots + (b_{23}-b_{24})\gamma_5 + &\\
	&(b_{22}-b_{23}-b_{30})\gamma_6 + (b_{21}-b_{22})\gamma_7 + \cdots + (b_{19}-b_{20})\gamma_9 +\\
	& (\displaystyle 4a-2b_1-\sum_{i=2}^8 b_i - \sum_{i=10}^{16}b_i-2b_{18}-2b_{19}-\sum_{i=20}^{29}b_i)\gamma_{10} + \\
	&(b_{16}-b_{17})\gamma_{11} + \cdots + (b_{10}-b_{11})\gamma_{17}  + (b_{2}-b_{10})\gamma_{18} +\\
	& (a-b_{1}-b_{2}-b_3)\gamma_{19} + (b_{3}-b_{4})\gamma_{20} + \cdots +  (b_{7}-b_{8})\gamma_{24} +  (b_{8}-b_{29})\gamma_{25} + \\
	&( 2a-b_1-\sum_{i=3}^8 b_i - \sum_{i=31}^{33}b_i)\gamma_{26} +  (3a-2b_1-b_2-\sum_{i=10}^{17} b_i - \sum_{i=31}^{33}b_i)\gamma_{27}.
	\end{flalign*}	
	
	Let $Q$ denote the matrix for the intersection form of $P$ with respect to the basis $\{u_1,\ldots,u_{27}\}$. Then 	
	\begin{flalign*}
	K|_P\cdot \omega|_P= Q^{-1}(K|_P,\omega|_P)= \displaystyle\frac{1}{585}\Big(&-5544a + 3309b_1+1082(b_2+b_{10}+\cdots+b_{17}) &\\
	&+1153(b_3+\cdots+b_8+b_{29})+1168b_{18} +601(b_{19}+\cdots+b_{22})\\
	&+670(b_{23}+\cdots+b_{28})+516b_{30}+1067(b_{31}+\cdots +b_{33})\Big)\\
	\end{flalign*}
	Finally, we have that
	\begin{flalign*}
	K_X\cdot \omega_X&= (K_X)|_Z\cdot (\omega_X)|_Z = K|_Z\cdot\omega|_Z= K\cdot\omega - K|_P\cdot \omega|_P  &\\
	&= \frac{1}{585}\Big(3789a-2724b_1-497(b_2+b_{10}+\cdots+b_{17})-568(b_3+\cdots+b_8+b_{29})-583b_{18}\\
	&\qquad\qquad-16(b_{19}+\cdots+b_{22})-85(b_{23}+\cdots+b_{28})+69b_{30}-482(b_{31}+\cdots +b_{33})\Big)\\
	&>\frac{1}{585}(a-\displaystyle\sum^{33}_{\begin{subarray}{l} i=1\\i\neq 9\end{subarray}}b_i)>0.\\
	\end{flalign*}	
	\end{proof}

\enlargethispage{0.2in}
\begin{prop} $X$ is minimal.\label{prop:minimal}\end{prop}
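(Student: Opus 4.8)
One way to approach this is to show that $X$ carries no embedded $\omega_X$-symplectic sphere of self-intersection $-1$; minimality then follows, since a symplectic $4$-manifold that is not minimal contains such a sphere (Taubes; for $b^+=1$ see Li--Liu \cite{liliu95}). It is worth recording first that $X$ lies away from the rational/ruled case: by Proposition~\ref{diffeomorphic} we have $K_X\cdot\omega_X>0$, so if $X$ were rational or ruled then, being simply connected (Proposition~\ref{homeomorphic}) with $b_2(X)=6$, it would have to be diffeomorphic to $\CP\#5\CPb$; but $\CP\#5\CPb$ carries a unique symplectic structure up to diffeomorphism and deformation \cite{liliu95}, for which the canonical class pairs \emph{negatively} with the symplectic form, a contradiction. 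In particular $\kappa(X)\ge 0$.

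Now suppose, for contradiction, that $X$ contained an $\omega_X$-symplectic $(-1)$-sphere, with class $E\in H_2(X;\Z)$. The adjunction equality gives $E^2=-1$ and $K_X\cdot E=-1$, while $E\cdot\omega_X>0$ since it is the $\omega_X$-area of the sphere. Write $X=Z\cup B$ with $Z=\CP\#32\CPb\setminus P$ and $B$ the rational homology $4$-ball. Since $\partial P$ is a rational homology sphere and $H_*(B;\Q)=0$, a Mayer--Vietoris argument identifies $H_2(X;\Q)$ with the orthogonal complement $\{u_1,\dots,u_{27}\}^{\perp}\subset H_2(\CP\#32\CPb;\Q)$ of the plumbing sphere classes recorded in Figure~\ref{fig:homclasses}; under this identification the intersection form of $X$ is the restriction of that of $\CP\#32\CPb$, the class $K_X$ is the restriction of $K:=\mathrm{PD}(-3h+\sum_{i\ne 9}e_i)$, and $\omega_X$ is (up to a positive scale) the restriction of the period point $\omega:=\mathrm{PD}(ah-\sum_{i\ne 9}b_ie_i)$ appearing in the proof of Proposition~\ref{diffeomorphic}, where $a>b_i>0$ and $a>\sum_{i\ne 9}b_i$. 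Thus $E$ lifts to a class $v\in\{u_1,\dots,u_{27}\}^{\perp}\subset H_2(\CP\#32\CPb;\Q)$ with
\[
v^2=-1,\qquad K\cdot v=-1,\qquad \omega\cdot v>0 .
\]

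What remains is to rule out the existence of such a $v$, and this is the heart of the matter. The complement $\{u_1,\dots,u_{27}\}^{\perp}$ is a rank-$6$ lattice, necessarily isometric to $\langle 1\rangle\oplus 5\langle -1\rangle$ (Freedman's theorem, via Proposition~\ref{homeomorphic}); the plan is to fix a standard basis for it, use the explicit homology classes in Figure~\ref{fig:homclasses} to express $K_X$ and the cone of admissible symplectic classes $\omega_X$ in that basis, and then check — by the same kind of computation (inverting the plumbing intersection matrix and using the inequalities on $a,b_1,\dots,b_{33}$) that was carried out for $K_X\cdot\omega_X$ in Proposition~\ref{diffeomorphic} — that there is no class $v$ with $v^2=v\cdot K_X=-1$ pairing positively with $\omega_X$. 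Equivalently, one shows that $K_X$ is already the canonical class of a minimal symplectic $4$-manifold. I expect this last bookkeeping step — pinning down the complement lattice and eliminating every candidate exceptional class — to be the only real difficulty; everything preceding it is formal.
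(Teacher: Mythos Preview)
Your approach is quite different from the paper's.  The paper follows the Ozsv\'ath--Szab\'o strategy: by a direct (computer--aided) Seiberg--Witten computation it shows that $X$ has exactly two basic classes $\pm K_X$, and minimality follows since a splitting $X\cong X'\#\CPb$ would force at least four.  No attempt is made to analyze homological $(-1)$--classes.

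Your alternative --- show that no class $v\in H_2(X;\Z)$ satisfies $v^2=K_X\cdot v=-1$ and $\omega_X\cdot v>0$ --- is a legitimate sufficient criterion, but the step you label ``bookkeeping'' is the entire content of the argument, and there is no a priori reason to expect it to go through.  The lattice $H_2(X;\Z)\cong\langle 1\rangle\oplus 5\langle -1\rangle$ has many square $-1$ classes; after putting $K_X$ in standard position (every characteristic element of square $4$ is equivalent to $\pm(3h-e_1-\cdots-e_5)$), each $e_i$ and each $h-e_i-e_j$ already gives $v^2=-1$ with $K_X\cdot v=\pm 1$.  Whether \emph{all} of those with $K_X\cdot v=-1$ pair negatively with $\omega_X$ depends on exactly where $\omega_X$ sits relative to the walls $v^{\perp}$, and the coarse inequalities $a>b_i$, $a>\sum b_i$ on the blow--up parameters are nowhere near enough to decide this --- you would need the actual period point, not just the chamber it lies in.  More importantly, if your search \emph{does} turn up such a $v$, you learn nothing: the existence of a homology class meeting the adjunction numerics for an exceptional sphere does \emph{not} imply $X$ is non--minimal.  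Indeed, that is precisely what ``exotic'' means here --- the homological $(-1)$--classes of an exotic $\CP\#5\CPb$ are exactly the classes that fail to be represented by smooth spheres.  This distinction is invisible to the intersection form and the symplectic class alone, which is why the paper resorts to the gauge--theoretic count.
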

\begin{proof}
	Following the strategy of Ozsv\'{a}th-Szab\'{o} in \cite{ozsvathszabo}, we will show that $X$ has a unique basic class (up to sign), which implies that $X$ is minimal. The following calculations were completed in Matlab. Because the efficacy of this type of computation is well-documented (c.f.  \cite{ozsvathszabo}, \cite{stipsiczszabo},\cite{karakurtstarkston}), we will only highlight the main steps.
	
We first find the following basis for $H_2(Z)$: 
$$A_1=h-e_1-e_{18}-e_{31},\quad A_2=h-e_1-e_{18}-e_{32},\quad A_3=h-e_1-e_{18}-e_{33},$$ 
$$A_4=3h-3e_1+2e_{18}-\sum_{i=19}^{22}2e_i-2e_{30}-e_{31}-e_{32}-e_{33}$$
$$A_5=3e_{18}-\sum_{i=23}^{28}e_i+e_{30}$$
$$A_6=36h-21e_1-7e_2-\sum_{i=3}^{8}8e_i-\sum_{i=10}^{17}7e_i+2e_{18}+\sum_{i=23}^{28}e_i-8e_{29}-e_{30}-e_{31}-e_{32}-e_{33}$$

Notice that $A_1,A_2,A_3$ and $A_5$ can be represented by spheres, $A_4$ can be represented by a torus, and $A_6$ can be represented by a surface of genus 387 (\cite{linonnegclasses}). Moreover, $A_1^2, A_2^2, A_3^2=-2$, $A_4^2=-27$, $A_5=-16$, and $A_6=-48$. 

We would like to find the number of basic classes $L$ on $X$ satisfying the additional criterion: $|L(A_i)|\le -A_i^2$ and $L(A_i)\equiv A_i^2\pmod 2$. Such classes are called \textit{adjunctive} classes. There are 9,317,700 possible adjunctive classes. If $L$ a basic class, then by the Seiberg-Witten dimension formula, $d=\frac{L^2-3\sigma(X)-2\chi(X)}{4}=\frac{L^2-4}{4}\ge 0$ and $d\equiv 0\pmod 2$. Thus $L^2\ge 4$ and $L\equiv 4\pmod 8$. This restriction leaves us with 13,960 possible adjunctive basic classes.

Now, if $L$ is a basic class on $X$, then by the gluing formula (\cite{parkbdown}), there is a basic class $\tilde{L}$ on $\CP\#32\CPb$ inducing $L$ such that $(\tilde{L}|_P)^2=-27$. Moreover, the set of basic classes on $\CP\#32\CPb$ inducing $L$ contains an element $\tilde{L}$ such that $u_i^2+2\le \tilde{L}_P(u_i) \le -u_i^2$ for all $1\le i\le 27$ (where $u_i$ are the homology classes of the spheres in $P$ shown in Figure \ref{fig:homclasses}). There are 585 classes on $P$ satisfying these conditions. Thus we have a total of 8,166,600 classes on $\CP\#32\CPb$ that could give rise to adjunctive basic classes on $X$. These are given by $(L|_Z,\tilde{L}|_P)$. 

Let $H=(2,0,0,0,1,1,0,\ldots,0)\in H_2(\CP\#32\CPb;\Q)$, written in the basis 
$\{A_1,\ldots,A_6,u_1,\ldots,u_{27}\}$. Then $H(u_i)=0$ for all $i$, $H^2>0$, and $H\cdot PD(h)>0$. 
By the wall-crossing formula, if $\tilde{L}$ is a basic class on $\CP\#32\CPb$ inducing a basic class on $X$, then $\text{sign}(\tilde{L}\cdot H)\neq\text{sign}(\tilde{L}\cdot h)$. There are 1788 such classes. Finally, we find that only 2 of these 1788 classes are integral. They are $\pm K$ (the canonical class). Consequently, $X$ has a two adjunctive basic classes.

Finally following the argument in \cite{ozsvathszabo}, it is easy to see that every basic class on $X$ must be adjunctive. Thus $X$ has a unique basic class (up to sign).
\end{proof}

\medskip
\begin{remark} \label{improve}
It is worth highlighting that we derived our rational blowdown configuration only using the first two clusters in the positive factorization $(t_1^{14} \, t_at_b t_4 t_6) \cdot   (t_1 t_3 t_5t_7)^{ t_2^{-1} t_5 t_6^{-1}}
	\cdot D_6 = t_{\delta_1}t_{\delta_2}$ in $\M(\Sigma_3^2)$, where the key point is the topological configuration of the disjoint Dehn twist curves in each cluster. Curiously, if one can get a similar positive factorization but with another cluster,  such as a factor \textit{conjugate} to $(t_a t_3 t_5t_7)$,  rationally blowing down a disjoint $(-4)$-sphere there would yield an exotic $\CP \# \, 4\CPb$.  In our factorization,  we can find two of the desired Dehn twist factors in $D_6$, but not all four.
\end{remark}

\clearpage
\vspace{0.1in}

\bibliographystyle{plain}
\bibliography{Bibliography.bib}

\end{document}